\documentclass[11pt]{article}

\usepackage[margin=1in]{geometry}
\usepackage{amsmath,amssymb,amsthm,mathtools}
\usepackage{microtype}
\usepackage{enumitem}
\usepackage{xspace}
\usepackage{aliascnt}
\usepackage{hyperref}
\usepackage[dvipsnames]{xcolor}
\usepackage[capitalize,nameinlink]{cleveref}
\hypersetup{
    colorlinks=true,
    pdfpagemode=UseNone,
    citecolor=OliveGreen,
    linkcolor=NavyBlue,
    urlcolor=Magenta,
    pdfstartview=FitW
}

\theoremstyle{plain}
\newtheorem{theorem}{Theorem}[section]
\newaliascnt{lemma}{theorem}
\newtheorem{lemma}[lemma]{Lemma}
\aliascntresetthe{lemma}
\newaliascnt{proposition}{theorem}
\newtheorem{proposition}[proposition]{Proposition}
\aliascntresetthe{proposition}
\newaliascnt{corollary}{theorem}
\newtheorem{corollary}[corollary]{Corollary}
\aliascntresetthe{corollary}

\theoremstyle{definition}
\newaliascnt{definition}{theorem}
\newtheorem{definition}[definition]{Definition}
\aliascntresetthe{definition}
\newaliascnt{example}{theorem}
\newtheorem{example}[example]{Example}
\aliascntresetthe{example}

\theoremstyle{remark}
\newaliascnt{remark}{theorem}
\newtheorem{remark}[remark]{Remark}
\aliascntresetthe{remark}
\newaliascnt{note}{theorem}

\aliascntresetthe{note}

\crefname{theorem}{theorem}{theorems}
\Crefname{theorem}{Theorem}{Theorems}
\crefname{lemma}{lemma}{lemmas}
\Crefname{lemma}{Lemma}{Lemmas}
\crefname{proposition}{proposition}{propositions}
\Crefname{proposition}{Proposition}{Propositions}
\crefname{corollary}{corollary}{corollaries}
\Crefname{corollary}{Corollary}{Corollaries}
\crefname{definition}{definition}{definitions}
\Crefname{definition}{Definition}{Definitions}
\crefname{example}{example}{examples}
\Crefname{example}{Example}{Examples}
\crefname{exercise}{exercise}{exercises}
\Crefname{exercise}{Exercise}{Exercises}
\crefname{remark}{remark}{remarks}
\Crefname{remark}{Remark}{Remarks}
\crefname{note}{note}{notes}
\Crefname{note}{Note}{Notes}

\def\*#1{\mathbf{#1}}
\def\+#1{\mathcal{#1}}
\def\-#1{\mathrm{#1}}
\newcommand{\bb}{\mathbb}

\newcommand{\tp}[1]{\left(#1\right)}
\newcommand{\stp}[1]{\left[#1\right]}
\newcommand{\set}[1]{\left\{#1\right\}}
\newcommand{\abs}[1]{\left|#1\right|}
\newcommand{\norm}[1]{\left\|#1\right\|}
\newcommand{\ceil}[1]{\left\lceil#1\right\rceil}

\newcommand{\inner}[2]{\left\langle#1,\,#2\right\rangle}

\newcommand{\cmid}{\,:\,}

\newcommand{\cov}[1]{\mathsf{cov}\tp{#1}}

\newcommand{\dd}{\,\-d}
\newcommand{\eps}{\varepsilon}
\newcommand{\defeq}{:=}

\DeclareMathOperator{\supp}{supp}

\def\Emery{\'Emery\xspace}

\def\Poincare{Poincar\'e\xspace}

\title{Optimal Mixing of Glauber Dynamics for the Sherrington--Kirkpatrick Model at \(\beta < 1 / 2\)}
\author{Sihan Wang\thanks{Zhiyuan College, Shanghai Jiao Tong University, Shanghai, China. Email: \texttt{wangsihan\_leo@sjtu.edu.cn}.}}
\date{August 28, 2026}

\begin{document}
\pagenumbering{gobble}

\maketitle

\begin{abstract}
    We prove that for every fixed inverse temperature \(\beta < 1 / 2\), with high probability over the disorder, the single-site Glauber dynamics for the \(n\)-spin Sherrington--Kirkpatrick model mixes from every initial configuration to within total variation distance \(\eps\) in \(O_{\beta}\tp{n \log\tp{n / \eps}}\) steps. The bound holds uniformly over all external fields and is optimal up to constants depending only on \(\beta\). The main ingredient is a deterministic criterion for optimal-order \Poincare inequalities in general Ising models, established via the integrated Bakry--\Emery criterion together with a new two-spin estimate. A standard application of the localization-scheme framework of Chen and Eldan then upgrades the \Poincare inequality to a modified log-Sobolev inequality, yielding the optimal mixing-time bound. The main ideas underlying the proof of the \Poincare inequality were generated by GPT-5.6 Sol Ultra.
\end{abstract}

\tableofcontents

\newpage
\pagenumbering{arabic}

\section{Introduction}
\label{sec:introduction}

The Sherrington--Kirkpatrick (SK) model, introduced in \cite{SK75}, is a canonical mean-field model of spin glasses in statistical physics. A central algorithmic question is whether its Gibbs measure can be approximately sampled by Glauber dynamics in polynomial time up to \(\beta = 1\) \cite[Open Problem~15]{BKMR25}. Physical theory predicts fast convergence to equilibrium, at least for suitable observables and initializations, throughout the high-temperature regime \(\beta < 1\) \cite{SZ81,MPV87}.

We begin by introducing the SK model and Glauber dynamics. Let \(\*J \in \bb{R}^{n \times n}\) and \(\*h \in \bb{R}^n\). The Ising model with interaction \(\*J\) and external field \(\*h\) is the probability measure on \(\set{\pm 1}^n\) given by
\begin{equation}
    \mu_{\*J, \*h}\tp{\*x} \propto \exp\tp{\frac{1}{2} \*x^\top \*J \*x + \*h^\top \*x}, \quad \*x \in \set{\pm 1}^n.
    \label{eq:ising-law}
\end{equation}
The \(n\)-dimensional SK model is the Ising model whose symmetric off-diagonal interaction entries are independent and satisfy
\begin{equation}
    J_{ij} \sim \+N\tp{0, \frac{\beta^2}{n}}, \qquad 1 \le i < j \le n,
    \label{eq:sk-matrix}
\end{equation}
where \(\+N\tp{\mu, \sigma^2}\) denotes the Gaussian law with mean \(\mu\) and variance \(\sigma^2\). Equivalently, one may start from \(\*J = \beta \*W\) with \(\*W \sim \-{GOE}\tp{n}\) and discard its diagonal. Since the diagonal entries do not affect the Ising law, we take \(J_{ii} = 0\) for all \(i \in \stp{n}\). The parameter \(\beta > 0\) is the inverse temperature.

A standard Markov chain Monte Carlo (MCMC) method for sampling from spin systems is Glauber dynamics. Let \(\mu\) be a probability measure on \(\set{\pm 1}^n\). At each step, the Glauber dynamics targeting \(\mu\) updates the current configuration \(\*x \in \set{\pm 1}^n\) as follows:
\begin{itemize}
    \item Select a coordinate \(i \sim \-{Unif}\tp{\stp{n}}\).
    \item Obtain the new configuration \(\*x'\) by flipping the \(i\)-th coordinate of \(\*x\) with probability \(\frac{\mu\tp{\*x^{\oplus i}}}{\mu\tp{\*x} + \mu\tp{\*x^{\oplus i}}}\), where \(\*x^{\oplus i}\) denotes \(\*x\) with its \(i\)-th coordinate flipped.
\end{itemize}
Throughout this paper, we denote by \(P_{\*J, \*h}\) the Glauber dynamics kernel targeting the Ising model \(\mu_{\*J, \*h}\). Its \(\eps\)-mixing time is defined as
\[
T_{\-{mix}}\tp{\eps; P_{\*J, \*h}} \defeq \sup_{\*x \in \set{\pm 1}^n} \min\set{t \in \bb{N} \cmid d_{\-{TV}}\tp{P_{\*J, \*h}^t\tp{\*x, \cdot}, \mu_{\*J, \*h}} \le \eps}.
\]
We say that the chain mixes rapidly if its mixing time is polynomial in \(n\).

We next recall replica symmetry in the zero-field SK model (i.e., \(\*h = \*0\)). Let \(\*x, \*y \sim \mu_{\*J, \*0}\) be two independent samples, called replicas, and define their overlap by
\[
R\tp{\*x, \*y} \defeq \frac{1}{n} \sum_{i = 1}^n x_i y_i.
\]
For \(\beta < 1\), the overlap concentrates around \(0\) as \(n \to \infty\): two independent replicas exhibit no nontrivial macroscopic overlap \cite{GT02}. This high-temperature phase is the replica-symmetric regime. The critical point is \(\beta = 1\): the model undergoes its spin-glass phase transition there, while replica symmetry is broken for \(\beta > 1\) \cite{Ton02}.

Replica symmetry provides a natural physical heuristic for rapid mixing. Informally, when \(\beta < 1\), one does not expect the Gibbs measure to decompose into large, well-separated regions of substantial mass that create bottlenecks for local dynamics. Correspondingly, there should be no large free-energy barriers that trap a local Markov chain for very long times, suggesting that Glauber dynamics can efficiently explore the equilibrium measure. Replica symmetry, however, is a thermodynamic property and does not by itself imply rapid mixing. Establishing this algorithmic prediction rigorously is therefore a distinct and challenging problem.

Before the present work, the strongest rapid-mixing guarantee for Glauber dynamics that was uniform over arbitrary external fields reached \(\beta \lessapprox 0.295\) \cite{AKV24}. In the zero-field model, polynomial-time samplers with vanishing total-variation error are known for \(\beta < 1 / 2\) \cite{DLSS26a,DLSS26b}. Throughout the broader replica-symmetric regime \(\beta < 1\), efficient sampling is known in normalized Wasserstein distance \cite{Cel24}, but not in total variation. Our main result establishes optimal mixing of Glauber dynamics throughout the regime \(\beta < 1 / 2\).

\begin{theorem}
    Let \(\*J\) denote the SK interaction matrix at inverse temperature \(\beta < 1 / 2\), as defined in \eqref{eq:sk-matrix}. Then with probability at least \(1 - o_{\beta}\tp{1}\), the Glauber dynamics kernel \(P_{\*J, \*h}\) satisfies
    \[
    T_{\-{mix}}\tp{\eps; P_{\*J, \*h}} \le C_{\beta} n \log\tp{\frac{n}{\eps}}, \quad \forall \eps \in \tp{0, 1}
    \]
    simultaneously over all external fields \(\*h \in \bb{R}^n\), where \(C_{\beta} > 0\) is a constant depending only on \(\beta\).
    \label{thm:sk-optimal-mixing}
\end{theorem}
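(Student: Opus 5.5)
The plan follows the abstract: first a deterministic \Poincare criterion for general Ising models, then the localization-scheme upgrade of Chen and Eldan to a modified log-Sobolev inequality, and finally the standard passage from MLSI to mixing time.

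\textbf{Step 1: deterministic \Poincare criterion.} I would prove the following. Suppose $\*J$ is symmetric with zero diagonal and satisfies
\begin{equation*}
\lambda_{\max}(\*J)-\lambda_{\min}(\*J) < 1
\end{equation*}
together with a mild delocalization condition; for SK this could be $\max_i\sum_j J_{ij}^2 = O(1)$, or a bound on $\norm{\*J}_{\infty}$-type quantities. Then for every $\*h$, the Glauber dynamics for $\mu_{\*J,\*h}$ has spectral gap at least $c/n$, with $c$ depending only on these parameters. For the SK matrix, $\lambda_{\max}-\lambda_{\min}\to 4\beta$ almost surely, so the hypothesis holds exactly when $\beta<1/2$.

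To prove it, I would shift $\*J$ by a multiple of the identity so that $\*J\succeq 0$ with $\norm{\*J}_{\mathrm{op}}<1$; shifting does not change the law. Then I would write the Ising model via the Hubbard--Stratonovich decomposition as a mixture over a Gaussian field $\*y$ of product measures with external field $\*h+\*J^{1/2}\*y$ (equivalently, run the stochastic localization of Chen--Eldan). The variance decomposes into a product-measure part and a covariance part. The product part is controlled by the Glauber gap of product measures. The main difficulty is to control the evolution of the covariance matrix along the localization path, i.e.\ to show
\begin{equation*}
\norm{\cov{\mu_t}}_{\mathrm{op}}\le C
\end{equation*}
uniformly along the path. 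I would obtain this through an integrated Bakry--\Emery argument applied to the tilted measures. The naive bound $\cov{\cdot}\preceq (I-\*J)^{-1}$ is exactly what requires $\norm{\*J}_{\mathrm{op}}<1$ after centering. The spectral-width condition, rather than the cruder $2\norm{\*J}_{\mathrm{op}}<1$, needs a sharper estimate. Here I would use the promised two-spin estimate: bound $\mathrm{Var}(x_i)+\mathrm{Var}(x_j)$ against the pair covariance under pinning. This lets the diagonal-shift freedom be exploited optimally, so that only $\lambda_{\max}-\lambda_{\min}$ enters. I expect this step, making the covariance bound depend on the spectral width rather than the norm, to be the main obstacle. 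My first attempt would be to choose the shift $\*J+\tfrac{|\lambda_{\min}|}{1}I$ and control the resulting diagonal contribution via the two-spin inequality on $\set{\pm1}^2$ marginals.

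\textbf{Step 2: upgrade to MLSI.} Given spectral independence (an operator-norm covariance bound uniform over external fields, which follows from Step 1 because the criterion is field-uniform), I would apply the Chen--Eldan localization-scheme framework. Running stochastic localization until time $T$ makes the measure a product-like Ising model with small effective $\*J$, for which MLSI with constant $c/n$ is classical (Dobrushin-type conditions). The entropy-conservation factor is $\exp(-\int_0^T\norm{\cov{\mu_t}}\,dt)$, bounded by a constant. This yields MLSI constant $c_\beta/n$.

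\textbf{Step 3: mixing time.} MLSI gives $T_{\mathrm{mix}}(\eps)\le \rho^{-1}\bigl(\log\log(1/\mu_{\min})+\log(1/\eps)\bigr)$. Since $\log(1/\mu_{\min})=O(n+\norm{\*h}_1)$, the field dependence would spoil uniformity. To avoid this, I would instead use the sharper pointwise bound through a warm-start or coordinate-wise argument, or note that
\begin{equation*}
\log\mu(\*x)^{-1}\le O(n\norm{\*J}_{\mathrm{op}})+\sum_i\log\bigl(1+e^{-2|h_i|}\bigr)^{-1}\text{-type terms},
\end{equation*}
whose worst case is handled by coordinates with huge fields freezing quickly, giving $O(n\log(n/\eps))$. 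Finally, the SK input is that $\lambda_{\max}-\lambda_{\min}\le 4\beta+o(1)<2$ with probability $1-o(1)$, together with the row-norm bounds, both standard GOE facts. Since these hold simultaneously for all $\*h$, the result follows uniformly over external fields.
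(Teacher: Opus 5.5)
Your Step~1 controls the wrong quantity, and this breaks the argument for $1/4 \le \beta < 1/2$. The criterion you propose, $\lambda_{\max}(\*J) - \lambda_{\min}(\*J) < 1$ plus delocalization, is essentially the known spectral-width theorem of \cite{EKZ22,AKJVP22}. For SK the width tends to $4\beta$, so this criterion applies exactly when $\beta < 1/4$, not when $\beta < 1/2$ as you assert. At the end of Step~3 you silently switch to the condition $4\beta + o(1) < 2$. No criterion of the form ``width $< 2$'' can be true. The Curie--Weiss matrix $J_{ij} = \beta/n$ ($i \ne j$) is delocalized ($\max_i \sum_j J_{ij}^2 \to 0$) and has width $\beta$, yet its zero-field Glauber dynamics mixes exponentially slowly for $\beta > 1$. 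So width $<1$ is already sharp among width-based criteria. Hence no refinement of your diagonal-shift/Hubbard--Stratonovich covariance argument that keeps the width as the controlling quantity can reach $\beta < 1/2$. Even your first move, shifting $\*J$ to $0 \preceq \*J$ with $\norm{\*J}_{\-{op}} < 1$, is impossible once $\beta > 1/4$.

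The missing idea is that for dense, individually weak interactions the relevant quantity is the operator norm of $\*T = (\tanh J_{ij})_{i,j}$. For SK this is $2\beta + o(1)$, half the width. The paper obtains it from the integrated Bakry--\Emery identity $\*E_{\mu}\stp{\Gamma_2\tp{f, f}} = n^{-2} \norm{\sum_i D_i f}_{L^2\tp{\mu}}^2$. The diagonal terms give $\sum_i \norm{D_i f}^2$. Each cross term $\inner{D_i f}{D_j f}_{L^2\tp{\mu}}$ is bounded below, after conditioning on the other $n-2$ spins, by the two-spin estimate $-T_{ij} \*E\stp{a_i a_j} - 2 T_{ij}^2 \*E\stp{b_i b_j} - C \abs{T_{ij}}^3 \tp{\norm{D_i f}^2 + \norm{D_j f}^2}$. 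Here $a_i = v_i d_i f$ and $b_i = m_i v_i d_i f$, with $a_i^2 + b_i^2 \le v_i (d_i f)^2$. Summing gives the gap $\frac{1}{n}\tp{1 - \max\set{\norm{\*T}_{\-{op}}, 2\norm{\*T \circ \*T}_{\-{op}}} - C r_3\tp{\*T}}$. This equals $\frac{1 - 2\beta - o(1)}{n}$ for SK, since $\norm{\*T \circ \*T}_{\-{op}} \le \beta^2 + o(1)$. Your ``two-spin estimate'' (comparing $\mathrm{Var}(x_i) + \mathrm{Var}(x_j)$ with a pinned pair covariance to optimize the diagonal shift) is a different statement and would not supply this.

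Given such a gap for every $\mu_{s\*J, \*h'}$ with $s \in [0,1]$, your Step~2 is fine in outline. The Chen--Eldan bound needs only these covariance bounds, and the width enters only through the exponent $\exp\tp{-(\lambda_{\max} - \lambda_{\min})\int_0^1 \alpha(s)\,\mathrm{d}s}$. Step~3 correctly notes that $\mu_{\min}$ depends on $\*h$ but leaves the fix unspecified. The paper uses a coupon-collector phase: conditional on every site having been updated, the law is $\exp\tp{4\sum_{i,j}\abs{J_{ij}}}$-warm uniformly in $\*h$, so $\log\log M = O(\log n)$.
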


Thus, throughout the regime \(\beta < 1 / 2\), our result gives the strongest natural form of efficient sampling: convergence in total variation from the worst initial condition, uniformly over arbitrary external fields, using the canonical Glauber dynamics, with the optimal \(O_{\beta}\tp{n \log n}\) mixing time. This order is sharp for random-scan single-site dynamics, even for product measures.

\subsection{Technical Overview}
\label{subsec:technical-overview}

We prove rapid mixing of Glauber dynamics by establishing functional inequalities. Define the Dirichlet form of the Glauber dynamics \(P_{\*J, \*h}\) by
\[
\+E_{P_{\*J, \*h}}\tp{f, g} \defeq \inner{f}{\tp{I - P_{\*J, \*h}} g}_{L^2\tp{\mu_{\*J, \*h}}}.
\]
A \Poincare inequality with constant \(\gamma > 0\) is an inequality of the form
\[
\gamma \*{Var}_{\mu_{\*J, \*h}}\stp{f} \le \+E_{P_{\*J, \*h}}\tp{f, f}, \quad \forall f \in L^2\tp{\mu_{\*J, \*h}}.
\]
A modified log-Sobolev inequality with constant \(\rho > 0\) is an inequality of the form
\[
\rho \*{Ent}_{\mu_{\*J, \*h}}\stp{f} \le \+E_{P_{\*J, \*h}}\tp{\log f, f}, \quad \forall f \in L^2\tp{\mu_{\*J, \*h}} \cmid f \ge 0.
\]
Standard arguments show that polynomial-time mixing from the worst initial state is equivalent to a \Poincare inequality with constant \(\gamma = 1 / \-{poly}\tp{n}\), given \(\min_{x \in \supp\tp{\mu}} \mu\tp{x} = 1 / \exp\tp{\-{poly}\tp{n}}\). Moreover, a modified log-Sobolev inequality yields entropy contraction and is the key ingredient for establishing the optimal \(O_{\beta}\tp{n \log n}\) mixing time.

Our proof of \Cref{thm:sk-optimal-mixing} consists of two main steps. First, we establish a \Poincare inequality with the optimal-order constant \(\gamma = \Omega_{\beta}\tp{1 / n}\) for the SK Glauber dynamics. The argument combines the integrated Bakry--\Emery formulation of the \Poincare inequality, a new two-spin estimate, and a random-matrix estimate for the SK model. This constitutes the main technical contribution of the paper. Second, a standard application of the localization-scheme framework of \cite{CE25} upgrades the \Poincare inequality to a modified log-Sobolev inequality with the optimal-order constant \(\rho = \Omega_{\beta}\tp{1 / n}\). Together with an additional warm-start argument, this yields the optimal mixing-time bound.

\paragraph{Optimal-order \Poincare inequality.}
The seminal work of Bakry and \Emery \cite{BE85} introduced a powerful framework for establishing functional inequalities for Markov processes. Bakry--\Emery theory has become fundamental in the analysis of Markov diffusion operators, with broad applications including log-concave sampling \cite{Che26}. For Markov jump processes, however, the classical pointwise Bakry--\Emery criterion is often too restrictive to yield sharp functional inequalities. A more suitable tool in this setting is the integrated Bakry--\Emery criterion, which is equivalent to the \Poincare inequality for reversible Markov processes.

We briefly state an equivalent form of the integrated criterion for the Glauber dynamics \(P_{\*J, \*h}\), without introducing the carr\'e du champ operators. The \Poincare inequality for \(P_{\*J, \*h}\) with constant \(\gamma\) is equivalent to
\[
\norm{\tp{I - P_{\*J, \*h}} f}_{L^2\tp{\mu_{\*J, \*h}}}^2 \ge \gamma \inner{f}{\tp{I - P_{\*J, \*h}} f}_{L^2\tp{\mu_{\*J, \*h}}}, \quad \forall f \in L^2\tp{\mu_{\*J, \*h}}.
\]
This formulation is particularly well suited to our analysis. Expanding the left-hand side isolates the off-diagonal contributions, whose control constitutes the main technical challenge. Our key ingredient is a two-spin estimate. Conditioning on the remaining \(n - 2\) spins, the conditional law of any pair of spins is a two-spin Ising model with interaction \(J_{ij}\) and an arbitrary effective external field. We analyze this two-spin model using a Taylor-expansion-type estimate. This approach is particularly effective for the SK model, where the interactions are dense but individually weak. Summing the resulting estimates over all pairs yields the following deterministic criterion, which we prove in \Cref{subsec:integrated-bochner-estimate}.

\begin{theorem}[Deterministic \Poincare Criterion]
    Let \(\mu_{\*J, \*h}\) be an Ising model on \(\set{\pm 1}^n\), where \(\*J\) is symmetric and has zero diagonal. Write \(\*T \defeq \tp{\tanh J_{ij}}_{i, j = 1}^n\). There exist universal constants \(\delta \in \tp{0, 1}\) and \(C > 0\) such that the following holds: Define
    \[
    \gamma \defeq \frac{1}{n} \tp{1 - \max\set{\norm{\*T}_{\-{op}}, \, 2 \norm{\*T \circ \*T}_{\-{op}}} - C r_3\tp{\*T}}, \qquad r_3\tp{\*T} \defeq \max_{i \in \stp{n}} \sum_{j = 1}^n \abs{T_{ij}}^3,
    \]
    where \(\*T \circ \*T\) denotes the Hadamard product of \(\*T\) with itself. Suppose that \(\max_{i, j \in \stp{n}} \abs{T_{ij}} \le \delta\) and \(\gamma > 0\). Then the Glauber dynamics kernel \(P_{\*J, \*h}\) satisfies the \Poincare inequality with constant \(\gamma\):
    \[
    \gamma \*{Var}_{\mu_{\*J, \*h}}\stp{f} \le \+E_{P_{\*J, \*h}}\tp{f, f}, \quad \forall f \in L^2\tp{\mu_{\*J, \*h}}
    \]
    simultaneously over all external fields \(\*h \in \bb{R}^n\).
    \label{thm:deterministic-poincare-criterion}
\end{theorem}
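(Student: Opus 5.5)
The plan is to prove the equivalent integrated Bakry--\Emery form $\norm{(I-P)f}^2 \ge \gamma\,\inner{f}{(I-P)f}$ by an explicit expansion. Write $P_{\*J,\*h} = \frac1n\sum_{i} E_i$, where $E_i f \defeq \bb{E}_{\mu}[f \mid \*x_{-i}]$ is the heat-bath update at site $i$, and put $g_i \defeq (I-E_i)f$. Each $E_i$ is an orthogonal projection in $L^2(\mu)$, so
\[
\inner{f}{(I-P)f} = \frac1n\sum_i \norm{g_i}^2, \qquad \norm{(I-P)f}^2 = \frac1{n^2}\sum_i\norm{g_i}^2 + \frac1{n^2}\sum_{i\ne j}\inner{g_i}{g_j}.
\]
The diagonal part already gives $\frac1n\,\+E(f,f)$. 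It therefore suffices to show
\[
\sum_{i\ne j}\inner{g_i}{g_j} \ge -\Bigl(\max\set{\norm{\*T}_{\-{op}},\,2\norm{\*T\circ\*T}_{\-{op}}} + C r_3(\*T)\Bigr)\sum_i \norm{g_i}^2 .
\]
Nothing in this argument depends on $\*h$, so the resulting bound is uniform over external fields.

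\textbf{Step 1: two-spin reduction.} Each inner product splits as $\inner{g_i}{g_j} = \bb{E}_\mu\bigl[\bb{E}[g_ig_j \mid \*x_{-ij}]\bigr]$. Conditionally on $\*x_{-ij}$, the pair $(x_i,x_j)$ is a two-spin Ising model with coupling $J_{ij}$ and some effective fields $(a,b)$. For heat-bath updates, $g_i(\*x) = c_i(\*x_{-i})\,(x_i - m_i(\*x_{-i}))$, where $m_i$ is the conditional mean of $x_i$ and $c_i$ is proportional to the discrete derivative $f(\*x^{i+})-f(\*x^{i-})$. Inside the two-spin model, $c_i$ is a function of $x_j$ alone, and $m_i$ depends on $x_j$ through $\tanh(a + J_{ij}x_j)$. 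I would expand $c_i$ as $c_i = \bar c_i + \tilde c_i$, where $\bar c_i$ is its conditional mean over $x_j$ and $\tilde c_i$ is its fluctuation, which is proportional to $x_j - m_j$. I would then expand everything in powers of $t = \tanh J_{ij}$, uniformly in $(a,b)$.

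\textbf{Step 2: the two-spin estimate.} This is the step I expect to be the main obstacle. I aim for a pointwise (in $\*x_{-ij}$) bound of the form
\[
\bb{E}[g_ig_j\mid \*x_{-ij}] \ge -|T_{ij}|\,U_iU_j - 2T_{ij}^2\,V_iV_j - C|T_{ij}|^3\bigl(\norm{g_i}_{ij}^2+\norm{g_j}_{ij}^2\bigr).
\]
Here $U_i$ and $V_i$ are conditional $L^2$ norms of the two components of $g_i$ (roughly, the part of $g_i$ not varying with $x_j$, and the part varying with $x_j$), and they satisfy $U_i^2 + V_i^2 \le \norm{g_i}^2_{ij}$. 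The first-order term comes from pairing the $x_j$-independent parts, which are correlated through the coupling only at order $t$. The second-order term comes from the fluctuation parts, which interact at order $t^2$; the factor $2$ there should come out of the explicit variance of the two-spin law. I would derive the bound by writing the $4\times 4$ Gram computation exactly in terms of $\tanh a$, $\tanh b$ and $t$, and then checking that the remainder is $O(|t|^3)$ uniformly in the fields. The hypothesis $|T_{ij}|\le\delta$ is used here, to control the denominators $1 + t\tanh a\tanh b$ and to absorb cross terms by AM--GM.

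\textbf{Step 3: summation.} Integrate the Step 2 bound over $\*x_{-ij}$ and apply Cauchy--Schwarz, so that $\bb{E}[U_iU_j]\le u_iu_j$ and $\bb{E}[V_iV_j]\le v_iv_j$ with $u_i^2+v_i^2\le\norm{g_i}^2$. Here the main difficulty is that $U_i, V_i$ are defined relative to $j$; the Step 2 decomposition has to be chosen so that $\bb{E}[U_i^2]$ and $\bb{E}[V_i^2]$ can be bounded by quantities $u_i^2, v_i^2$ depending only on $i$, with $u_i^2 + v_i^2 \le \norm{g_i}^2$. Granting this, summing gives
\[
\sum_{i\neq j}\inner{g_i}{g_j} \ge -\norm{\*T}_{\-{op}}\norm{u}^2 - 2\norm{\*T\circ\*T}_{\-{op}}\norm{v}^2 - 2C\,r_3(\*T)\sum_i\norm{g_i}^2 .
\]
Using $\norm{u}^2+\norm{v}^2\le\sum_i\norm{g_i}^2$, the first two terms together are at most $\max\set{\norm{\*T}_{\-{op}},\,2\norm{\*T\circ\*T}_{\-{op}}}\sum_i\norm{g_i}^2$. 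Combined with the diagonal part this yields the Poincaré constant $\gamma$ after renaming $C$.
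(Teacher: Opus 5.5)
Your reduction to $\sum_{i\ne j}\langle g_i,g_j\rangle\ge-(\max\{\|\mathbf T\|_{\mathrm{op}},2\|\mathbf T\circ\mathbf T\|_{\mathrm{op}}\}+Cr_3(\mathbf T))\sum_i\|g_i\|^2$ and the conditioning on $\mathbf x_{-ij}$ match the paper. Steps 2--3, however, have a genuine gap in the first-order term.

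\textbf{Where the argument fails.} You bound the first-order term by $-|T_{ij}|U_iU_j$ with nonnegative conditional norms, then apply Cauchy--Schwarz pair by pair. After that, the only estimate available is $\sum_{i\ne j}|T_{ij}|u_iu_j\le\|\,|\mathbf T|\,\|_{\mathrm{op}}\|u\|^2$, where $|\mathbf T|:=(|T_{ij}|)$ is the entrywise absolute value. Your claim that summing gives $-\|\mathbf T\|_{\mathrm{op}}\|u\|^2$ does not follow.

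The loss is fatal. For the SK matrix, testing $|\mathbf T|$ on $\mathbf 1/\sqrt n$ gives $\|\,|\mathbf T|\,\|_{\mathrm{op}}\ge(1-o(1))\beta\sqrt{2n/\pi}$, whereas $\|\mathbf T\|_{\mathrm{op}}\approx 2\beta$. So the criterion you would actually prove gives $\gamma<0$ for every fixed $\beta>0$.

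Restoring the sign, as $-T_{ij}U_iU_j$, does not give a valid lower bound when $T_{ij}<0$, since the underlying conditional correlation can have either sign. Cauchy--Schwarz in Step 3 would then also point the wrong way.

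A bound by $\|\mathbf T\|_{\mathrm{op}}$ is possible only when the first-order terms have the form $-T_{ij}\mathbf E[a_ia_j]$ for one fixed family $(a_i)$. Then $\sum_{i\ne j}T_{ij}\mathbf E[a_ia_j]=\mathbf E[\mathbf a^\top\mathbf T\mathbf a]\le\|\mathbf T\|_{\mathrm{op}}\sum_i\mathbf E[a_i^2]$, and the cancellation comes from the signs of $T_{ij}$ across $j$ at a fixed configuration. Your $j$-dependent split $c_i=\bar c_i+\tilde c_i$ rules this out even in signed form. That is exactly the difficulty you flag and then grant in Step 3.

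\textbf{The missing idea: pair-independent weights.} The paper sets $a_i:=v_i\,d_if$ and $b_i:=m_iv_i\,d_if$. Here $m_i,v_i$ are the conditional mean and variance of $x_i$ given $\mathbf x_{-i}$, and $d_if$ is half the discrete derivative, so $g_i=(x_i-m_i)\,d_if$. These functions depend only on $\mathbf x_{-i}$. After conditioning on $\mathbf x_{-ij}$, they are exactly the corresponding two-spin quantities.

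The two-spin lemma is then proved in the signed form
\[
\langle g_i,g_j\rangle\ge-T_{ij}\mathbf E[a_ia_j]-2T_{ij}^2\mathbf E[b_ib_j]-C|T_{ij}|^3\bigl(\|g_i\|^2+\|g_j\|^2\bigr).
\]
The proof expands $f$ in the centered product basis and checks that in the difference the $pq$, $pr$, $qr$ coefficients are $O(|t|^3)$, while the $r^2$ coefficient is $1+O(t^2)\ge 0$.

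Summing gives $-\mathbf E[\mathbf a^\top\mathbf T\mathbf a]-2\mathbf E[\mathbf b^\top(\mathbf T\circ\mathbf T)\mathbf b]$, which is bounded pointwise in $\mathbf x$ by the operator norms. The argument closes with the pointwise identity $a_i^2+b_i^2=v_i(1-m_i^4)(d_if)^2\le v_i(d_if)^2$, whose expectation is $\|g_i\|^2$.

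\textbf{Your Step 2 heuristic.} It is also off. At leading order, both correction terms pair the $x_j$-independent coefficients of $d_if$ and $d_jf$. The $T_{ij}^2$ term is what cancels the second-order part of their exact coupling. The fluctuation--fluctuation contribution enters with a positive leading coefficient and is simply discarded.
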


The matrices \(\*T\) and \(\*T \circ \*T\) capture the first- and second-order contributions from the two-spin calculation, respectively, while \(r_3\tp{\*T}\) controls the higher-order remainder. For dense models with individually weak pairwise interactions, \(\norm{\*T}_{\-{op}}\) is typically the dominant term in our \Poincare criterion. In comparison, rapid mixing of Glauber dynamics is known under the spectral-width condition \(\lambda_{\max}\tp{\*J} - \lambda_{\min}\tp{\*J} < 1\) \cite{EKZ22,AKJVP22}. When \(\lambda_{\max}\tp{\*J}\) and \(\lambda_{\min}\tp{\*J}\) are approximately symmetric about \(0\), as in the SK model, our criterion effectively improves on this spectral-width condition by a factor of \(2\) for dense models with weak pairwise interactions. Here, \(\*J\) is assumed to have zero diagonal.

\begin{example}
    As an illustrative example of the criterion, consider the ferromagnetic Ising model on the complete bipartite graph \(K_{n, n}\). The interaction matrix at inverse temperature \(\beta > 0\) is
    \[
    \*J = \frac{\beta}{n}
    \begin{bmatrix}
        \*O_{n \times n} & \*1_{n \times n} \\
        \*1_{n \times n} & \*O_{n \times n}
    \end{bmatrix},
    \]
    where \(\*O_{n \times n}\) and \(\*1_{n \times n}\) denote the all-zero and all-one matrices, respectively. A direct computation shows that the spectrum of \(\*J\) is \(\set{0, \pm \beta}\). Hence, the spectral-width condition \(\lambda_{\max}\tp{\*J} - \lambda_{\min}\tp{\*J} < 1\) requires \(\beta < 1 / 2\). In contrast,
    \[
    \max_{i, j \in \stp{n}} \abs{T_{ij}} = \Theta\tp{\frac{\beta}{n}}, \qquad\! \norm{\*T}_{\-{op}} = \beta - \Theta\tp{\frac{\beta^3}{n^2}}, \qquad\! \norm{\*T \circ \*T}_{\-{op}} = \Theta\tp{\frac{\beta^2}{n}}, \qquad\! r_3\tp{\*T} = \Theta\tp{\frac{\beta^3}{n^2}}.
    \]
    Thus, \Cref{thm:deterministic-poincare-criterion} gives the improved condition \(\beta < 1\) for a \Poincare inequality, matching the zero-field phase transition point \(\beta_c = 1\) for this model.
\end{example}

For the SK model, standard random-matrix estimates verify the conditions of our criterion when \(\beta < 1 / 2\). Informally,
\[
\norm{\*T}_{\-{op}} \approx \norm{\*J}_{\-{op}} = 2 \beta + o\tp{1},
\]
where the estimate for \(\norm{\*J}_{\-{op}}\) is a standard consequence of random matrix theory. Similarly,
\[
\norm{\*T \circ \*T}_{\-{op}} \approx \norm{\*J \circ \*J}_{\-{op}} \approx \norm{\frac{\beta^2}{n} \*1_{n \times n}}_{\-{op}} = \beta^2.
\]
Moreover, \(\max_{i, j \in \stp{n}} \abs{T_{ij}}\) and \(r_3\tp{\*T}\) are negligible with high probability by standard Gaussian tail bounds. Therefore, the criterion yields the following \Poincare constant for the SK Glauber dynamics at \(\beta < 1 / 2\):
\[
\gamma = \frac{1 - 2 \beta - o\tp{1}}{n}.
\]

\begin{remark}
    After completing this work, we learned through personal communication that Heng Guo and Xinyuan Zhang independently obtained a stronger version of \Cref{thm:deterministic-poincare-criterion} (work in preparation). In particular, they show that if \(\lambda_{\max}\tp{\*T} < 1\), then the Glauber dynamics \(P_{\*J, \*h}\) satisfies the \Poincare inequality with constant
    \[
    \gamma = \frac{1 - \lambda_{\max}\tp{\*T}}{n}.
    \]
    As an immediate consequence, one obtains a spectral-gap bound for the SK Glauber dynamics for \(\beta < 1 / 2\). Their proof uses techniques entirely different from ours.
\end{remark}

\paragraph{Optimal-order modified log-Sobolev inequality.}
Once the \Poincare inequalities are established, upgrading them to a modified log-Sobolev inequality is a standard application of the localization-scheme framework of \cite{CE25}:
\[
\begin{aligned}
    \text{\Poincare inequality for every exponential tilt} &\Longrightarrow \text{covariance bound for every exponential tilt} \\
    &\Longrightarrow \text{approximate conservation of entropy} \\
    &\Longrightarrow \text{modified log-Sobolev inequality}.
\end{aligned}
\]
Finally, a coupon-collector phase of \(O\tp{n \log\tp{n / \eps}}\) steps produces, except with probability at most \(\eps / 2\), an \(\exp\tp{\-{poly}\tp{n}}\)-warm start for \(\mu_{\*J, \*h}\). The modified log-Sobolev inequality then gives mixing after another \(O_{\beta}\tp{n \log\tp{n / \eps}}\) steps, proving \Cref{thm:sk-optimal-mixing}.

\subsection{Related Work}
\label{subsec:related-work}

\paragraph{Sampling from the SK model.}
One line of work on sampling from the SK model establishes functional inequalities to control the mixing time of Glauber dynamics. \cite{BB19} introduced the spectral-width condition \(\lambda_{\max}\tp{\*J} - \lambda_{\min}\tp{\*J} < 1\), under which a log-Sobolev inequality holds, yielding the first rapid-relaxation result for the SK model at \(\beta < 1 / 4\).\footnote{Their result does not directly imply rapid mixing of Glauber dynamics because of a mismatch in the Dirichlet form. Nonetheless, the same argument applies to the Glauber Dirichlet form and yields a modified log-Sobolev inequality for Glauber dynamics; see \cite[Section~6.2.5]{BBD24}.} Under the same spectral-width condition, \cite{EKZ22} established a \Poincare inequality for Glauber dynamics, thereby obtaining rapid mixing for \(\beta < 1 / 4\). Using entropic independence, \cite{AKJVP22} strengthened this result to a modified log-Sobolev inequality, yielding the optimal \(O\tp{n \log n}\) mixing time. Subsequently, \cite{AKV24} developed a trickle-down analysis for localization schemes and proved optimal Glauber mixing up to \(\beta \approx 0.295\), the strongest such guarantee prior to the present work. More recently, \cite{DLSS26b} established weak \Poincare inequalities for the zero-field SK model in the regime \(\beta < 1 / 2\) and used them to analyze Glauber dynamics from a suitable warm start; these inequalities do not imply rapid mixing from worst-case initial states. In a complementary direction, \cite{BEAR26} proved rapid mixing of Glauber dynamics for every fixed \(\beta > 0\), provided the model is subject to a sufficiently strong homogeneous external field, i.e., \(\*h = h\*1\) with \(\abs{h} > h_0\tp{\beta}\).

A parallel line of work develops samplers based on algorithmic stochastic localization (ASL). ASL was introduced in \cite{EAMS22} by discretizing the stochastic localization process of \cite{Eld13}. For the zero-field SK model with \(\beta < 1 / 2\), \cite{EAMS22} constructed, for every fixed accuracy in normalized Wasserstein distance, an \(O\tp{n^2}\)-time sampling algorithm. \cite{Cel24} extended this guarantee throughout the replica-symmetric regime \(\beta < 1\). More recently, \cite{DLSS26a} obtained an efficient sampler with vanishing total-variation error for \(\beta < 1 / 2\) by combining ASL, potential Hessian ascent, and a rejection-sampling correction, with running time \(O\tp{\-{poly}\tp{n} \exp\tp{O\tp{1 / \eps}}}\).

On the hardness side, \cite{Sel25} showed that, for the zero-field SK model, Glauber dynamics exhibits exponentially slow mixing from the worst-case initialization for sufficiently large \(\beta\). Moreover, \cite{EAMS22} proved that no stable algorithm can approximately sample from the zero-field SK model in Wasserstein distance when \(\beta > 1\). This result, however, does not rule out MCMC methods such as Glauber dynamics.

More broadly, sampling algorithms have been studied for Ising and spherical mixed \(p\)-spin glasses. Rapid mixing in high-temperature regimes has been established via \Poincare and (modified) log-Sobolev inequalities \cite{ABXY24,AJKVP24,GJ19}, while ASL and simulated annealing yield efficient sampling guarantees in broader parameter regimes \cite{EAMS25,HMP24,HMRW25}.

\paragraph{Integrated Bakry--\Emery criterion.}
The integrated Bakry--\Emery criterion provides an equivalent formulation of the \Poincare inequality and is particularly useful for the analysis of jump processes. \cite{BCDPP06} developed a general framework for reversible jump processes based on this criterion and applied it to interacting particle systems to obtain spectral-gap estimates. \cite{KKO13} extended this framework to infinite volume and applied it to Glauber dynamics for point processes. More recently, \cite{GJMPPS26} adapted the argument of \cite{KKO13} to probability distributions on finite downward-closed set systems, yielding a short proof of a \Poincare criterion introduced in \cite{CCCYZ25}. Separately, \cite{Joh17} used a related Bakry--\Emery-type argument for one-dimensional birth-death chains and formulated the condition of \(c\)-log-concavity. In an upcoming work by the author, the integrated Bakry--\Emery criterion is used to establish \Poincare inequalities for birth-death chains on \(\bb{N}^d\) and derive discrete Brascamp--Lieb inequalities, while also recovering the \Poincare criterion of \cite{CCCYZ25,GJMPPS26}.

\subsection{Notation}
\label{subsec:notations}

We write \(\bb{N} \defeq \set{0, 1, 2, \ldots}\) and \(\stp{n} \defeq \set{1, \ldots, n}\). Bold symbols denote vectors or matrices, as determined by context. For \(S \subseteq \stp{n}\), \(\*x_{-S}\) is the restriction of \(\*x\) to the coordinates outside \(S\), and \(\*x_{-i} \defeq \*x_{-\set{i}}\); the same convention applies to random vectors. For \(\*x \in \set{\pm 1}^n\), we write \(\*x^{\oplus i}\) for \(\*x\) with its \(i\)-th coordinate flipped and \(\*x^{i \gets a}\) for \(\*x\) with that coordinate replaced by \(a\).

For a probability measure \(\mu\) on a finite set \(\Omega\), a function \(f \colon \Omega \to \bb{R}\), and an event \(A \subseteq \Omega\), we write \(\*E_{\mu}\stp{f}\), \(\*{Var}_{\mu}\stp{f}\), and \(\*{Pr}_{\mu}\stp{A}\) for expectation, variance, and probability under \(\mu\), respectively, and omit the subscript when the underlying law is clear. If another probability measure \(\nu\) on \(\Omega\) is absolutely continuous with respect to \(\mu\), we denote its Radon--Nikodym derivative by \(\frac{\dd \nu}{\dd \mu}\). Our total variation convention is
\[
d_{\-{TV}}\tp{\nu, \mu} \defeq \frac{1}{2} \sum_{x \in \Omega} \abs{\nu\tp{x} - \mu\tp{x}}.
\]
For functions \(f, g \colon \Omega \to \bb{R}\), we write \(\inner{f}{g}_{L^2\tp{\mu}} \defeq \*E_{\mu}\stp{f g}\) and
\[
\*{Ent}_{\mu}\stp{f} \defeq \*E_{\mu}\stp{f \log f} - \*E_{\mu}\stp{f} \log \*E_{\mu}\stp{f}, \qquad f \ge 0,
\]
with the convention \(0 \log 0 = 0\). For a probability measure \(\mu\), let \(\supp\tp{\mu}\) denote its support. If \(\mu\) is a measure on \(\bb{R}^n\), let \(\cov{\mu}\) denote its covariance matrix.

For a real matrix \(\*A\), \(\norm{\*A}_{\-{op}}\) and \(\norm{\*A}_{\-{F}}\) denote its Euclidean operator and Frobenius norms, while \(\lambda_{\max}\tp{\*A}\) and \(\lambda_{\min}\tp{\*A}\) denote its extreme eigenvalues. We write \(\*A \circ \*B\) for the Hadamard product. Under our normalization, \(\*W \sim \-{GOE}\tp{n}\) has independent upper-triangular entries with \(W_{ij} \sim \+N\tp{0, 1 / n}\) for \(i < j\) and \(W_{ii} \sim \+N\tp{0, 2 / n}\). All asymptotics are as \(n \to \infty\). A subscript \(\beta\) on \(O\) or \(o\) allows dependence on fixed \(\beta\), and probabilities concerning the SK model are taken over \(\*J\).

\section{Preliminaries}
\label{sec:preliminaries}

\subsection{Basics of Markov Chains}
\label{subsec:markov-chains-basics}

Let \(\Omega\) be a finite state space. A Markov chain is characterized by its transition kernel \(P \in \bb{R}^{\Omega \times \Omega}\). Throughout the paper, we identify the Markov chain with its transition kernel \(P\).

\begin{definition}
    A probability measure \(\mu\) on \(\Omega\) is stationary for \(P\) if \(\mu P = \mu\).
\end{definition}

\begin{definition}
    The Markov chain \(P\) is irreducible if, for all \(x, y \in \Omega\), there exists \(t \in \bb{N}\) such that \(P^t\tp{x, y} > 0\). It is aperiodic if, for every \(x \in \Omega\), \(\gcd\set{t \in \bb{N} \cmid P^t\tp{x, x} > 0} = 1\). It is ergodic if it is both irreducible and aperiodic.
\end{definition}

Ergodicity ensures that the stationary law is the unique long-time equilibrium.

\begin{theorem}[Fundamental Theorem of Markov Chains; see, e.g., \cite{LPW17}]
    Let \(P\) be an ergodic Markov chain on a finite state space \(\Omega\). Then \(P\) has a unique stationary distribution \(\mu\), and for every initial distribution \(\nu\) on \(\Omega\), \(\nu P^t \to \mu\) pointwise as \(t \to \infty\).
\end{theorem}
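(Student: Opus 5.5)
This is the classical result cited from \cite{LPW17}. The plan is to follow the standard route: primitivity of a power of \(P\), existence of a stationary law by compactness, and a Doeblin contraction that gives both uniqueness and convergence.

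\textbf{Step 1 (primitivity).} For a fixed state \(x\), let \(\mathcal{T}\tp{x} \defeq \set{t \ge 1 \cmid P^t\tp{x, x} > 0}\). This set is closed under addition, since \(P^{s+t}\tp{x,x} \ge P^s\tp{x,x} P^t\tp{x,x}\), and by aperiodicity its gcd is \(1\). An elementary number-theoretic lemma then shows that \(\mathcal{T}\tp{x}\) contains every sufficiently large integer. To see this, note that finitely many elements already have gcd \(1\), so Bézout produces two elements of \(\mathcal{T}\tp{x}\) differing by \(1\); all large integers are then nonnegative combinations of these two. Combining this with irreducibility, choose \(r\tp{x, y}\) with \(P^{r\tp{x,y}}\tp{x, y} > 0\) and use \(P^{t + r}\tp{x, y} \ge P^t\tp{x, x} P^r\tp{x, y}\). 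Since \(\Omega\) is finite, we obtain a single \(t_0\) such that \(P^{t_0}\tp{x, y} > 0\) for all \(x, y \in \Omega\). I expect this number-theoretic step to be the only mildly delicate point of the argument.

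\textbf{Step 2 (existence).} For an arbitrary initial law \(\nu\), the Cesàro averages \(\frac{1}{T} \sum_{t=0}^{T-1} \nu P^t\) lie in the compact probability simplex. Any limit point \(\mu\) satisfies \(\mu P = \mu\), because \(\frac{1}{T}\tp{\nu P^T - \nu} \to 0\). This is the Krylov--Bogoliubov argument. Alternatively, one can invoke Perron--Frobenius for the positive matrix \(P^{t_0}\).

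\textbf{Step 3 (Doeblin contraction, uniqueness, and convergence).} Let \(\theta \defeq \min_{x,y} P^{t_0}\tp{x, y} > 0\). Then \(P^{t_0}\tp{x, \cdot} \ge \theta \abs{\Omega}\, \pi\tp{\cdot}\) with \(\pi\) uniform, and in particular \(\abs{\Omega}\theta \le 1\).
\begin{itemize}
    \item Writing \(P^{t_0} = \abs{\Omega}\theta\, \*1 \pi + \tp{1 - \abs{\Omega}\theta} Q\) for a stochastic matrix \(Q\) gives
    \[
    d_{\-{TV}}\tp{\nu P^{t_0}, \nu' P^{t_0}} \le \tp{1 - \abs{\Omega}\theta}\, d_{\-{TV}}\tp{\nu, \nu'}
    \]
    for any two laws \(\nu, \nu'\), because the common part \(\abs{\Omega}\theta\, \pi\) cancels.
    \item Since \(P\) does not increase total variation, taking \(\nu' = \mu\) yields
    \[
    d_{\-{TV}}\tp{\nu P^t, \mu} \le \tp{1 - \abs{\Omega}\theta}^{\lfloor t / t_0 \rfloor} \to 0,
    \]
    which is the claimed pointwise convergence on the finite set \(\Omega\).
    \item If \(\mu'\) were another stationary law, the contraction applied to \(\mu\) and \(\mu'\) would force \(d_{\-{TV}}\tp{\mu, \mu'} = 0\), which proves uniqueness.
\end{itemize}
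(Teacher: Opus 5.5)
Your proof is correct. The paper gives no proof of this statement and only cites \cite{LPW17}, so the relevant comparison is with the textbook argument there.

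You share two ingredients with it. Step~1 is the same primitivity lemma: closure under addition, the gcd-one number-theoretic fact, then irreducibility. Your convergence step is the same Doeblin-type splitting of a positive power of \(P\) into a rank-one part plus a stochastic remainder.

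The genuine differences lie in existence, uniqueness, and the choice of minorizing measure. Levin--Peres--Wilmer proceed in three stages:
\begin{itemize}
\item They construct the stationary law explicitly from expected visit counts before the return time to a fixed state. This uses only irreducibility and yields \(\mu\tp{x} = 1 / \mathbf{E}_x\stp{\tau_x^+} > 0\).
\item They prove uniqueness separately, via constancy of harmonic functions and a rank argument for \(P - I\).
\item Only then do they minorize \(P^r\tp{x, \cdot} \ge \delta\, \mu\tp{\cdot}\) against the stationary law itself.
\end{itemize}
You minorize against the uniform law instead. This makes \(P^{t_0}\) a strict total-variation contraction between arbitrary pairs of laws, so uniqueness and the geometric rate come from a single estimate that does not involve \(\mu\). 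What you give up is the explicit formula and the positivity of \(\mu\).

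Your Step~2 is in fact dispensable. Probability measures on \(\Omega\) form a complete metric space under \(d_{\-{TV}}\), so Banach's fixed-point theorem gives a unique fixed point \(\mu\) of \(P^{t_0}\). Since \(\mu P\) is also fixed by \(P^{t_0}\), uniqueness gives \(\mu P = \mu\).

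Two edge cases you pass over are harmless. When \(\abs{\Omega}\theta = 1\), \(Q\) is undefined but carries weight zero. When the B\'ezout relation has no negative coefficients, \(1 \in \mathcal{T}\tp{x}\) directly.
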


We measure the resulting convergence in total variation. Besides worst-case initial states, we use warm starts, whose densities relative to stationarity are uniformly bounded.

\begin{definition}
    Let \(P\) be an ergodic Markov chain on a finite state space \(\Omega\) with stationary distribution \(\mu\). For \(\eps \in \tp{0, 1}\), the \(\eps\)-mixing time of \(P\) is
    \[
    T_{\-{mix}}\tp{\eps; P} \defeq \sup_{x \in \Omega} \min\set{t \in \bb{N} \cmid d_{\-{TV}}\tp{P^t\tp{x, \cdot}, \mu} \le \eps}.
    \]
    For \(M \ge 1\), its \(M\)-warm \(\eps\)-mixing time is
    \[
    T_{\-{mix}}^{\tp{M}}\tp{\eps; P} \defeq \sup_{\frac{\dd \nu}{\dd \mu} \le M} \min\set{t \in \bb{N} \cmid d_{\-{TV}}\tp{\nu P^t, \mu} \le \eps},
    \]
    where the supremum is over probability measures \(\nu\) on \(\Omega\).
\end{definition}

\begin{remark}
    It is straightforward to show that
    \[
    T_{\-{mix}}\tp{\eps; P} \le T_{\-{mix}}^{\tp{1 / \mu_{\min}}}\tp{\eps; P}, \qquad \mu_{\min} \defeq \min_{x \in \supp\tp{\mu}} \mu\tp{x}.
    \]
\end{remark}

\subsection{Functional Inequalities and Mixing Time}
\label{subsec:functional-inequalities-mixing}

Functional inequalities quantify convergence by comparing fluctuations or entropy with the Dirichlet form. We use the following discrete-time normalization.

\begin{definition}
    Let \(P\) be a Markov chain on a finite state space \(\Omega\) with stationary distribution \(\mu\). The generator and Dirichlet form of \(P\) are defined by
    \[
    \+L \defeq P - I, \qquad \+E_P\tp{f, g} \defeq -\inner{f}{\+L g}_{L^2\tp{\mu}}.
    \]
\end{definition}

We use a \Poincare inequality to control variance contraction and a modified log-Sobolev inequality to control entropy contraction.

\begin{definition}
    Let \(P\) be an ergodic Markov chain on a finite state space \(\Omega\) with stationary distribution \(\mu\).
    \begin{itemize}
        \item We say that \(P\) satisfies a \Poincare inequality with constant \(\gamma > 0\) if
        \[
        \gamma \*{Var}_{\mu}\stp{f} \le \+E_P\tp{f, f}, \quad \forall f \in L^2\tp{\mu}.
        \]
        \item We say that \(P\) satisfies a modified log-Sobolev inequality with constant \(\rho > 0\) if
        \[
        \rho \*{Ent}_{\mu}\stp{f} \le \+E_P\tp{\log f, f}, \quad \forall f \in L^2\tp{\mu} \cmid f \ge 0.
        \]
    \end{itemize}
\end{definition}

The corresponding constants yield the warm-start mixing bounds.

\begin{theorem}[Functional Inequalities and Mixing Time; see, e.g., \cite{LPW17,BT03}]
    Let \(P\) be an ergodic Markov chain on a finite state space \(\Omega\) with stationary distribution \(\mu\), and suppose that \(P\) is positive semidefinite on \(L^2\tp{\mu}\). For \(M > 1\), the following statements hold:
    \begin{itemize}
        \item If \(P\) satisfies a \Poincare inequality with constant \(\gamma\), then
        \[
        T_{\-{mix}}^{\tp{M}}\tp{\eps; P} \le \frac{1}{\gamma} \tp{\log\frac{1}{2 \eps} + \frac{1}{2} \log M}, \quad \forall \eps \in \tp{0, 1}.
        \]
        \item If \(P\) satisfies a modified log-Sobolev inequality with constant \(\rho\), then
        \[
        T_{\-{mix}}^{\tp{M}}\tp{\eps; P} \le \frac{1}{\rho} \tp{\log\frac{1}{2 \eps^2} + \log\log M}, \quad \forall \eps \in \tp{0, 1}.
        \]
    \end{itemize}
    \label{thm:functional-inequality-mixing}
\end{theorem}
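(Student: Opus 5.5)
The plan is to track the density \(f_t \defeq \frac{\dd (\nu P^t)}{\dd \mu}\) of the law at time \(t\), where \(\frac{\dd \nu}{\dd \mu} \le M\). We show that variance (respectively entropy) contracts geometrically at rate \(1-\gamma\) (respectively \(1-\rho\)), and then convert to total variation. Throughout, positive semidefiniteness of \(P\) on \(L^2\tp{\mu}\) includes self-adjointness, i.e.\ reversibility. Hence \(f_{t+1} = P f_t\), and \(f_t = P^t f_0\) with \(f_0 = \frac{\dd \nu}{\dd \mu}\). Since \(\mu\) is stationary, \(\*E_\mu\stp{f_t} = 1\) for all \(t\).

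\emph{Poincar\'e case.} The spectrum of \(P\) lies in \(\stp{0,1}\) by positive semidefiniteness and because \(P\) is a Markov contraction. The \Poincare inequality \(\gamma\, \*{Var}_\mu\stp{g} \le \inner{g}{(I-P)g}\) says that every eigenvalue of \(P\) on mean-zero functions is at most \(1-\gamma\). Therefore, for mean-zero \(g\),
\[
\*{Var}_\mu\stp{Pg} = \inner{g}{P^2 g} \le (1-\gamma)^2 \*{Var}_\mu\stp{g}.
\]
Iterating gives \(\*{Var}_\mu\stp{f_t} \le e^{-2\gamma t}\, \*{Var}_\mu\stp{f_0}\). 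Warmness gives \(\*{Var}_\mu\stp{f_0} = \*E_\mu\stp{f_0^2} - 1 \le M - 1 < M\). By Cauchy--Schwarz,
\[
d_{\-{TV}} = \tfrac12 \*E_\mu\abs{f_t - 1} \le \tfrac12 \sqrt{\*{Var}_\mu\stp{f_t}} \le \tfrac12 e^{-\gamma t} \sqrt{M}.
\]
The right-hand side is at most \(\eps\) once \(t \ge \frac1\gamma\tp{\log\frac{1}{2\eps} + \frac12 \log M}\), which is the first claim.

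\emph{MLSI case.} The target is the one-step entropy contraction
\[
\*{Ent}_\mu\stp{Pf} \le (1-\rho)\, \*{Ent}_\mu\stp{f} \quad \text{for all } f \ge 0.
\]
Given this, \(\*{Ent}_\mu\stp{f_t} \le e^{-\rho t}\, \*{Ent}_\mu\stp{f_0}\). Since \(\*{Ent}_\mu\stp{f_0} = \*E_\nu\stp{\log f_0} \le \log M\), Pinsker's inequality gives
\[
d_{\-{TV}} \le \sqrt{\tfrac12 \*{Ent}_\mu\stp{f_t}} \le \sqrt{\tfrac12 e^{-\rho t} \log M}.
\]
This is at most \(\eps\) once \(t \ge \frac1\rho\tp{\log\frac{1}{2\eps^2} + \log\log M}\), matching the stated bound.

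The main obstacle is deriving the discrete-time entropy contraction from the MLSI, since \(\+E_P\tp{\log f, f}\) is a continuous-time quantity. Convexity of \(\phi(u) = u\log u\) in the form \(\phi(f) - \phi(Pf) \ge \phi'(Pf)(f - Pf)\) yields
\[
\*{Ent}_\mu\stp{f} - \*{Ent}_\mu\stp{Pf} \ge \inner{(I-P)f}{\log Pf}.
\]
This differs from \(\+E_P\tp{\log f, f}\). I would first try to close the gap using positive semidefiniteness: factor \(P = P^{1/2} P^{1/2}\), and interpolate along the semigroup \(s \mapsto e^{-s(I-P)} f\), whose entropy dissipation is exactly \(\+E_P\tp{\log f_s, f_s}\). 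Then compare the discrete step with continuous time via the spectral calculus, using \(1 - \lambda \le -\log \lambda\) on \((0,1]\). Failing a clean self-contained argument, I would invoke the standard result of \cite{BT03} (or the entropic-independence literature) that, for PSD reversible kernels, an MLSI with constant \(\rho\) implies \(\*{Ent}_\mu\stp{Pf} \le (1-\rho)\, \*{Ent}_\mu\stp{f}\). I would treat that as the cited ingredient, with the rest of the argument being the routine bookkeeping above.
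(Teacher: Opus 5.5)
Your \Poincare case is the standard spectral argument behind the citation, and it is correct (modulo rounding \(t\) up to an integer, which the statement itself glosses over). The gap is in the MLSI case. The one-step contraction you set as your target, \(\*{Ent}_\mu\stp{Pf} \le (1-\rho)\*{Ent}_\mu\stp{f}\), is false for PSD reversible kernels, so it can be neither derived nor cited.

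The reason shows up near equilibrium. Take \(f = 1 + \eps g\) with \(\*E_\mu\stp{g} = 0\). Then \(\*{Ent}_\mu\stp{f} \approx \frac{\eps^2}{2}\*E_\mu\stp{g^2}\) and \(\+E_P\tp{\log f, f} \approx \eps^2 \+E_P\tp{g, g}\). So the MLSI constant can be as large as twice the spectral gap \(\gamma\), whereas one step of \(P\) shrinks entropy there only by the variance factor \((1-\gamma)^2 = 1 - 2\gamma + \gamma^2\).

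A concrete counterexample: let \(\Omega = \set{1, 2}\), \(\mu\) uniform, \(\Pi f \defeq \*E_\mu\stp{f}\), and \(P = \lambda I + (1-\lambda)\Pi\) with \(\lambda \in [0, 1)\). This kernel is ergodic, reversible and PSD. For \(f = (1+u, 1-u)\),
\[
\+E_P\tp{\log f, f} = (1-\lambda)\sum_{k \ge 1} \frac{u^{2k}}{2k-1}, \qquad \*{Ent}_\mu\stp{f} = \sum_{k \ge 1} \frac{u^{2k}}{2k(2k-1)}.
\]
The coefficient ratio \(2k(1-\lambda)\) increases in \(k\), so the best MLSI constant is \(\rho = 2(1-\lambda)\). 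But \(Pf = (1+\lambda u, 1-\lambda u)\), so \(\*{Ent}_\mu\stp{Pf} / \*{Ent}_\mu\stp{f} \to \lambda^2\) as \(u \to 0\). Since \(\lambda^2 - (1-\rho) = (1-\lambda)^2 > 0\), your target fails; for \(\lambda = 0\) it would even force \(\*{Ent}_\mu\stp{\Pi f} < 0\).

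What does hold in general is the reverse comparison
\(\*{Ent}_\mu\stp{f} - \*{Ent}_\mu\stp{Pf} = \+E_P\tp{\log f, f} - \*E_\mu\stp{Pf \log\tp{Pf / f}} \le \+E_P\tp{\log f, f}\).
This is why one-step entropy contraction implies an MLSI and not conversely. There is no result in \cite{BT03} or in the entropic-independence literature to invoke here.

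What the stated bound really needs is only \(\*{Ent}_\mu\stp{P^t f} \le e^{-\rho t}\*{Ent}_\mu\stp{f}\), which is consistent with the example since \(\lambda^2 \le e^{-2(1-\lambda)}\). Your proposed tools do not produce it, for three reasons.
\begin{itemize}
\item Entropy is not a quadratic form, so the spectral domination \(P \preceq e^{-(I-P)}\) coming from \(1 - \lambda \le -\log\lambda\) does not compare \(\*{Ent}_\mu\stp{Pf}\) with \(\*{Ent}_\mu\stp{e^{-(I-P)} f}\).
\item \(P^{1/2}\) is in general not a Markov operator (it can have negative entries), so data processing cannot be applied to it.
\item The identity \(\frac{\mathrm{d}}{\mathrm{d}s}\*{Ent}_\mu\stp{f_s} = -\+E_P\tp{\log f_s, f_s}\) along \(f_s = e^{-s(I-P)} f\) is exactly the continuous-time mechanism behind the \(\frac{1}{\rho}\tp{\log\frac{1}{2\eps^2} + \log\log M}\) bound of \cite{BT03}. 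It governs the semigroup \(e^{-t(I-P)}\), not \(P^t\).
\end{itemize}
Reaching the discrete-time chain requires an extra ingredient, such as a genuine discrete-time entropy contraction. For Glauber dynamics, convexity of entropy and the chain rule give
\(\*{Ent}_\mu\stp{Pf} \le \*{Ent}_\mu\stp{f} - \frac{1}{n}\sum_{i=1}^n \*E_\mu\stp{\*{Ent}_{\mu(\cdot \mid \*x_{-i})}\stp{f}}\),
so approximate tensorization of entropy yields geometric entropy decay for \(P^t\) directly. As written, the MLSI half of your proof rests on a false lemma.
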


\subsection{Integrated Bakry--\Emery Criterion}
\label{subsec:integrated-bakry-emery}

We first recall the carr\'e du champ, iterated carr\'e du champ, and reversibility.

\begin{definition}
    Let \(P\) be a Markov chain on a finite state space \(\Omega\) with stationary distribution \(\mu\), and let \(\+L\) denote its generator. The carr\'e du champ operator \(\Gamma\) and the iterated carr\'e du champ operator \(\Gamma_2\) associated with \(P\) are defined by
    \[
    \Gamma\tp{f, g} \defeq \frac{1}{2} \tp{\+L\tp{fg} - f \+L g - g \+L f}, \qquad \Gamma_2\tp{f, g} \defeq \frac{1}{2} \tp{\+L\Gamma\tp{f, g} - \Gamma\tp{f, \+L g} - \Gamma\tp{g, \+L f}}.
    \]
\end{definition}

\begin{definition}
    Let \(P\) be a Markov chain on a finite state space \(\Omega\). We say that \(P\) is reversible with respect to a probability measure \(\mu\) on \(\Omega\) if the detailed balance condition holds:
    \[
    \mu\tp{x} P\tp{x, y} = \mu\tp{y} P\tp{y, x}, \quad \forall x, y \in \Omega.
    \]
    Equivalently, \(P\) is self-adjoint on \(L^2\tp{\mu}\).
\end{definition}

For a reversible chain, the integrated curvature bound below is equivalent to the \Poincare inequality.

\begin{theorem}[{Integrated Bakry--\Emery Criterion; cf. \cite[Proposition~4.8.3]{BGL14}}]
    Let \(P\) be an ergodic Markov chain on a finite state space \(\Omega\) that is reversible with respect to \(\mu\). Then \(P\) satisfies a \Poincare inequality with constant \(\gamma\) if and only if
    \[
    \*E_{\mu}\stp{\Gamma_2\tp{f, f}} \ge \gamma \*E_{\mu}\stp{\Gamma\tp{f, f}}, \quad \forall f \in L^2\tp{\mu}.
    \]
\end{theorem}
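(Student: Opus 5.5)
The plan is to reduce both sides of the criterion to spectral quantities of the self-adjoint operator \(-\+L = I - P\) on \(L^2\tp{\mu}\), and then compare eigenvalue by eigenvalue. Two facts will be used throughout. First, stationarity gives \(\*E_{\mu}\stp{\+L g} = \inner{\*1}{\tp{P - I} g}_{L^2\tp{\mu}} = 0\) for every \(g\), since \(\mu P = \mu\). Second, reversibility means \(\+L\) is self-adjoint on \(L^2\tp{\mu}\).

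\textbf{Step 1: the integrated carr\'e du champ.} Using \(\*E_{\mu}\stp{\+L\tp{fg}} = 0\) and self-adjointness, we get
\[
\*E_{\mu}\stp{\Gamma\tp{f, g}} = -\frac{1}{2}\tp{\inner{f}{\+L g}_{L^2\tp{\mu}} + \inner{g}{\+L f}_{L^2\tp{\mu}}} = -\inner{f}{\+L g}_{L^2\tp{\mu}}.
\]
In particular, \(\*E_{\mu}\stp{\Gamma\tp{f, f}} = \+E_P\tp{f, f}\).

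\textbf{Step 2: the integrated iterated carr\'e du champ.} The term \(\*E_{\mu}\stp{\+L \Gamma\tp{f, f}}\) vanishes by stationarity. Applying Step 1 with \(g = \+L f\) gives \(\*E_{\mu}\stp{\Gamma\tp{f, \+L f}} = -\inner{f}{\+L^2 f}_{L^2\tp{\mu}} = -\norm{\+L f}_{L^2\tp{\mu}}^2\). Hence
\[
\*E_{\mu}\stp{\Gamma_2\tp{f, f}} = \norm{\+L f}_{L^2\tp{\mu}}^2 .
\]
The criterion therefore reads \(\norm{\+L f}_{L^2\tp{\mu}}^2 \ge \gamma \inner{f}{-\+L f}_{L^2\tp{\mu}}\), which matches the form stated in the technical overview.

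\textbf{Step 3: spectral comparison.} Since \(-\+L\) is self-adjoint on the finite-dimensional space \(L^2\tp{\mu}\), take an orthonormal eigenbasis \(\phi_0 = \*1, \phi_1, \ldots\) with eigenvalues \(\lambda_k\). These satisfy \(\lambda_k \ge 0\), because \(\+E_P\tp{f, f} = \frac{1}{2}\sum_{x, y} \mu\tp{x} P\tp{x, y} \tp{f\tp{x} - f\tp{y}}^2 \ge 0\). By irreducibility, the kernel of \(\+L\) consists exactly of the constants, so \(\lambda_k > 0\) for \(k \ge 1\). Expanding \(f = \sum_k c_k \phi_k\), we have
\[
\*{Var}_{\mu}\stp{f} = \sum_{k \ge 1} c_k^2, \qquad \+E_P\tp{f, f} = \sum_{k \ge 1} \lambda_k c_k^2, \qquad \norm{\+L f}_{L^2\tp{\mu}}^2 = \sum_{k \ge 1} \lambda_k^2 c_k^2.
\]
Testing either inequality on \(f = \phi_k\) shows that each is equivalent to \(\lambda_k \ge \gamma\) for all \(k \ge 1\). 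For the integrated inequality this uses \(\lambda_k^2 \ge \gamma \lambda_k \iff \lambda_k \ge \gamma\), valid because \(\lambda_k > 0\). Conversely, \(\lambda_k \ge \gamma\) for all \(k \ge 1\) gives both inequalities for general \(f\) by summing over \(k\). Thus the two statements are equivalent.

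The only delicate point is Step 2, specifically keeping track of factors and signs when expanding \(\Gamma_2\). Once one notes that every term of the form \(\*E_{\mu}\stp{\+L\tp{\cdot}}\) vanishes, this is routine. Aperiodicity is not needed for this argument; only irreducibility is used, to rule out zero eigenvalues on nonconstant functions.
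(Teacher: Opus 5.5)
Your proof is correct and follows the same route the paper relies on. Steps~1--2 reproduce exactly the identities $\*E_{\mu}\stp{\Gamma\tp{f, f}} = -\inner{f}{\+L f}_{L^2\tp{\mu}}$ and $\*E_{\mu}\stp{\Gamma_2\tp{f, f}} = \norm{\+L f}_{L^2\tp{\mu}}^2$ that the paper derives in the proof of \Cref{prop:integrated-carre-du-champ}. The paper does not prove the equivalence itself but defers to \cite[Proposition~4.8.3]{BGL14}, and your eigenvalue comparison (using irreducibility to get $\lambda_k > 0$ on nonconstant eigenfunctions, with aperiodicity correctly noted as unnecessary) is a complete, self-contained substitute for that citation in this finite reversible setting.
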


\section{Optimal-Order \Poincare Inequality}
\label{sec:optimal-poincare}

In this section, we establish an optimal-order \Poincare inequality for the SK model when \(\beta < 1 / 2\), simultaneously over arbitrary external fields. Our argument uses the integrated Bakry--\Emery criterion for Glauber dynamics, which is equivalent to the \Poincare inequality. We first derive the deterministic \Poincare criterion \Cref{thm:deterministic-poincare-criterion} in \Cref{subsec:integrated-bochner-estimate}, whose key input is the two-spin estimate proved in \Cref{subsec:two-spin-estimate}. We then verify the criterion for the SK model using the random-matrix estimates in \Cref{subsec:sk-estimates-poincare}. The vanishing error sequences in these estimates can be chosen independently of \(\beta\). The main result of this section is the following theorem.

\begin{theorem}
    For every fixed \(\beta < 1 / 2\), let \(\*J\) be the SK interaction matrix at inverse temperature \(\beta\), as defined in \eqref{eq:sk-matrix}. There exists a deterministic positive sequence \(\eta_n \to 0\), independent of \(\beta\), such that for all sufficiently large \(n \ge n_0\tp{\beta}\), with probability at least \(1 - o_{\beta}\tp{1}\), the Glauber dynamics kernel \(P_{\*J, \*h}\) satisfies the following \Poincare inequality simultaneously over all external fields \(\*h \in \bb{R}^n\):
    \[
    \frac{1 - 2 \beta - \eta_n}{n} \*{Var}_{\mu_{\*J, \*h}}\stp{f} \le \+E_{P_{\*J, \*h}}\tp{f, f}, \quad \forall f \in L^2\tp{\mu_{\*J, \*h}}.
    \]
    \label{thm:sk-optimal-poincare}
\end{theorem}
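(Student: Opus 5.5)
The plan is to deduce \Cref{thm:sk-optimal-poincare} from the deterministic criterion \Cref{thm:deterministic-poincare-criterion}. I will show that, with probability \(1 - o_{\beta}(1)\), the matrix \(\*T = (\tanh J_{ij})\) satisfies four conditions:
\[
\max_{i,j}|T_{ij}| \le \delta, \qquad \norm{\*T}_{\-{op}} \le 2\beta + \eta'_n, \qquad 2\norm{\*T\circ\*T}_{\-{op}} \le 2\beta^2 + \eta'_n, \qquad C r_3(\*T) \le \eta'_n,
\]
for a deterministic sequence \(\eta'_n \to 0\) that does not depend on \(\beta\). Since \(\beta < 1/2\) gives \(2\beta^2 < \beta < 2\beta\), the maximum in the criterion is at most \(2\beta + \eta'_n\). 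Taking \(\eta_n \defeq 2\eta'_n\), we get \(\gamma \ge (1 - 2\beta - \eta_n)/n\). This is positive once \(n \ge n_0(\beta)\), and the criterion applies uniformly in \(\*h\), since \(\*h\) plays no role in the hypotheses.

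\emph{Entrywise bounds.} A union bound over \(n^2\) Gaussians of variance \(\beta^2/n \le 1/(4n)\) gives \(\max|J_{ij}| \le \sqrt{4\log n/n}\) with high probability. Hence \(\max|T_{ij}| \le \delta\) for large \(n\). The third-moment bound needs no operator norm: \(|T_{ij}|^3 \le |J_{ij}|^3\), so \(r_3(\*T) \le n \max|J_{ij}|^3 = O((\log n)^{3/2}/\sqrt n)\).

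\emph{Operator norm of \(\*T\).} Write \(\*T = \*J + \*E\), where \(E_{ij} = \tanh J_{ij} - J_{ij}\) and \(|E_{ij}| \le |J_{ij}|^3/3\). Bound \(\norm{\*E}_{\-{op}}\) by the maximal row sum, \(n \max|J_{ij}|^3 = o(1)\). For \(\*J\), use a standard GOE bound. One option is \(\*J = \beta(\*W - \-{diag}(\*W))\) with \(\norm{\*W}_{\-{op}} \le 2 + o(1)\) with high probability; the diagonal part has norm \(O(\sqrt{\log n/n})\). To keep the sequence independent of \(\beta\), use a quantitative bound such as \(\Pr[\norm{\*W}_{\-{op}} \ge 2 + t] \le e^{-nt^2/4}\) with a fixed \(t_n \to 0\), say \(t_n = n^{-1/3}\). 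Since \(\beta < 1/2\), the error \(\beta t_n\) is at most \(t_n\).

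\emph{Hadamard square.} Write \(T_{ij}^2 = J_{ij}^2 + O(J_{ij}^4)\); the remainder matrix again has vanishing row-sum norm. Then decompose
\[
\*J\circ\*J = \frac{\beta^2}{n}(\*1 - \*I) + \*G, \qquad G_{ij} = J_{ij}^2 - \frac{\beta^2}{n}.
\]
The first term has norm at most \(\beta^2\). The matrix \(\*G\) is symmetric with independent centered entries, each of size \(O(\log n/n)\) after truncation and with variance \(O(1/n^2)\). Matrix Bernstein, or a Bandeira--van Handel type bound, gives \(\norm{\*G}_{\-{op}} = O(\sqrt{\log n/n} + \log^2 n/n) = o(1)\) with high probability. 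I expect this step to be the main, though mild, obstacle: the entries of \(\*G\) are non-Gaussian and unbounded, so one must first truncate on the high-probability event \(\max|J_{ij}| \le \sqrt{4\log n/n}\), then re-center and apply the concentration inequality. Once all four error terms are fixed deterministic \(o(1)\) quantities (scaled using \(\beta \le 1/2\)), intersecting the events finishes the proof.
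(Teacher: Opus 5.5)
Your proposal is correct, and its skeleton matches the paper's proof. Both reduce the statement to the deterministic criterion and verify the four entrywise and spectral estimates on a single high-probability event. Both absorb the Hadamard term using $2\beta^2 \le 2\beta$, and both note that $\*h$ does not enter the hypotheses, so the conclusion is uniform in the field. The only real divergence is how you bound $\norm{\*T\circ\*T}_{\-{op}}$, and the paper's route there is much lighter.

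The paper observes that $\*T\circ\*T$ is symmetric and entrywise nonnegative, so its operator norm is at most its maximal row sum. Using $T_{ij}^2 \le J_{ij}^2$, this gives $\norm{\*T\circ\*T}_{\-{op}} \le \max_i \sum_j J_{ij}^2$. Each row sum is $(\beta^2/n)$ times a $\chi^2_{n-1}$ variable, so chi-square concentration plus a union bound over the $n$ rows yields $\beta^2 + o(1)$.

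This makes several of your steps unnecessary:
\begin{itemize}
\item splitting $\*J\circ\*J$ into $\frac{\beta^2}{n}(\*1-\*I)$ plus a centered fluctuation $\*G$;
\item truncating and re-centering the entries of $\*G$;
\item applying matrix Bernstein.
\end{itemize}
So the step you flagged as the main obstacle is not actually one. Your route does give the sharper fact that the fluctuation part is $O(\sqrt{\log n/n})$ in operator norm, but the criterion only needs an upper bound, and the row-sum bound already has comparable error $O(\sqrt{\log n/n})$. Your truncation-plus-Bernstein sketch is nonetheless sound.

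The remaining differences are cosmetic.
\begin{itemize}
\item You bound $\norm{\*T-\*J}_{\-{op}}$ by a row sum, while the paper uses the Frobenius norm; both give $O(\sqrt{\log^3 n/n})$.
\item Your explicit GOE tail with fixed $t_n$, together with scaling by $\beta\le 1/2$, makes the $\beta$-independence of $\eta_n$ more transparent than the paper's bare appeal to standard random matrix theory. Strictly, that tail is $2e^{-nt^2/4}$ once both spectral edges are counted, which is harmless.
\end{itemize}
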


\subsection{Integrated Bakry--\Emery Estimate}
\label{subsec:integrated-bochner-estimate}

Fix \(\*J\) and \(\*h\) throughout this subsection, and abbreviate \(\mu \defeq \mu_{\*J, \*h}\) and \(P \defeq P_{\*J, \*h}\). We assume that \(\*X \sim \mu\) and that \(\*J\) has zero diagonal.

For each \(i \in \stp{n}\), define the single-site resampling operator \(P_i\) by
\begin{equation}
    P_i f\tp{\*x} \defeq \*E\stp{f\tp{\*X} \mid \*X_{-i} = \*x_{-i}}.
    \label{eq:single-site-resampling-operator}
\end{equation}
The Glauber dynamics kernel \(P\) is the average of these single-site resampling operators:
\begin{equation}
    P = \frac{1}{n} \sum_{i = 1}^n P_i.
    \label{eq:kernel-coordinate-decomposition}
\end{equation}
Let \(\+L \defeq P - I\) denote the generator associated with \(P\). Accordingly, \(\+L\) admits the coordinate decomposition
\begin{equation}
    -\+L = \frac{1}{n} \sum_{i = 1}^n D_i, \qquad D_i \defeq I - P_i.
    \label{eq:generator-coordinate-decomposition}
\end{equation}

\begin{proposition}
    Let \(\Gamma\) and \(\Gamma_2\) denote the carr\'e du champ and iterated carr\'e du champ operators associated with \(P\). Then, for every \(f \in L^2\tp{\mu}\),
    \[
    \*E_{\mu}\stp{\Gamma\tp{f, f}} = \frac{1}{n} \sum_{i = 1}^n \norm{D_i f}_{L^2\tp{\mu}}^2, \qquad \*E_{\mu}\stp{\Gamma_2\tp{f, f}} = \frac{1}{n^2} \sum_{i, j = 1}^n \inner{D_i f}{D_j f}_{L^2\tp{\mu}}.
    \]
    \label{prop:integrated-carre-du-champ}
\end{proposition}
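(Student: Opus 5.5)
We reduce both identities to standard facts about reversible generators together with the projection structure of the single-site operators. The plan is to first record that each \(P_i\) is the conditional expectation onto functions of \(\*x_{-i}\). It is therefore a self-adjoint idempotent on \(L^2\tp{\mu}\), and so is \(D_i = I - P_i\): we have \(D_i^2 = D_i\) and \(D_i^* = D_i\). By \eqref{eq:kernel-coordinate-decomposition}, \(P\) is a convex combination of self-adjoint operators, so \(P\) is reversible with respect to \(\mu\). Consequently \(\+L\) is self-adjoint and \(\mu\) is stationary, i.e.\ \(\*E_\mu\stp{\+L g} = 0\) for every \(g\).

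For the first identity, we take expectations in the definition of \(\Gamma\). Stationarity kills \(\*E_\mu\stp{\+L\tp{f^2}}\), so \(\*E_\mu\stp{\Gamma\tp{f,f}} = -\inner{f}{\+L f}_{L^2\tp{\mu}}\). Substituting \eqref{eq:generator-coordinate-decomposition}, this equals \(\frac1n \sum_i \inner{f}{D_i f}\). We then use \(D_i = D_i^* D_i\) to rewrite \(\inner{f}{D_i f} = \norm{D_i f}^2\), which gives the claimed formula.

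For the second identity, taking expectations in the definition of \(\Gamma_2\) again removes the \(\+L\Gamma\) term by stationarity, so \(\*E_\mu\stp{\Gamma_2\tp{f,f}} = -\*E_\mu\stp{\Gamma\tp{f, \+L f}}\). Next we use the polarized version of the computation above. Taking expectations in \(\Gamma\tp{f,g}\) and using stationarity together with self-adjointness of \(\+L\) gives \(\*E_\mu\stp{\Gamma\tp{f,g}} = -\inner{f}{\+L g}\). Applying this with \(g = \+L f\) yields \(\*E_\mu\stp{\Gamma_2\tp{f,f}} = \inner{f}{\+L^2 f} = \norm{\+L f}^2_{L^2\tp{\mu}}\), where the last step is self-adjointness of \(\+L\) once more. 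Finally we expand \(\norm{\frac1n\sum_i D_i f}^2 = \frac{1}{n^2}\sum_{i,j}\inner{D_i f}{D_j f}\).

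There is no real obstacle here. The only point requiring care is checking that \(P_i\) is genuinely the \(L^2\tp{\mu}\)-orthogonal projection onto \(\sigma\tp{\*X_{-i}}\)-measurable functions. This is immediate from the tower property: \(\*E\stp{g\, P_i f} = \*E\stp{P_i g\, P_i f} = \*E\stp{f\, P_i g}\). We also need to keep the sign conventions consistent with \(\+L = P - I\).
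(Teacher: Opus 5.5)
Your proposal is correct and follows essentially the same route as the paper. Both arguments identify each \(D_i\) as a self-adjoint idempotent, use reversibility to reduce \(\mathbf{E}_{\mu}[\Gamma(f,f)]\) to \(-\langle f, \mathcal{L} f\rangle_{L^2(\mu)}\) and \(\mathbf{E}_{\mu}[\Gamma_2(f,f)]\) to \(\|\mathcal{L} f\|_{L^2(\mu)}^2\), and then expand using \(-\mathcal{L} = \frac{1}{n}\sum_i D_i\). Your explicit use of stationarity (\(\mathbf{E}_{\mu}[\mathcal{L} g] = 0\)) and of the polarized formula \(\mathbf{E}_{\mu}[\Gamma(f,g)] = -\langle f, \mathcal{L} g\rangle_{L^2(\mu)}\) just spells out steps the paper compresses into ``since \(P\) is reversible.''
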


\begin{proof}[Proof of \Cref{prop:integrated-carre-du-champ}]
    For each \(i \in \stp{n}\), the conditional-expectation operator \(P_i\) is the orthogonal projection in \(L^2\tp{\mu}\) onto the subspace of functions measurable with respect to \(\*X_{-i}\). Hence \(P_i\) is self-adjoint and \(P_i^2 = P_i\), so \(D_i = I - P_i\) is also self-adjoint and satisfies \(D_i^2 = D_i\). Since \(P\) is reversible with respect to \(\mu\), the definitions of \(\Gamma\) and \(\Gamma_2\) give
    \[
    \*E_{\mu}\stp{\Gamma\tp{f, f}} = -\inner{f}{\+L f}_{L^2\tp{\mu}}
    \]
    and
    \[
    \*E_{\mu}\stp{\Gamma_2\tp{f, f}} = -\*E_{\mu}\stp{\Gamma\tp{f, \+L f}} = \inner{f}{\+L^2 f}_{L^2\tp{\mu}} = \norm{\+L f}_{L^2\tp{\mu}}^2.
    \]
    Using \eqref{eq:generator-coordinate-decomposition} and the self-adjoint idempotence of each \(D_i\), we obtain
    \[
    \*E_{\mu}\stp{\Gamma\tp{f, f}} = \frac{1}{n} \sum_{i = 1}^n \inner{f}{D_i f}_{L^2\tp{\mu}} = \frac{1}{n} \sum_{i = 1}^n \norm{D_i f}_{L^2\tp{\mu}}^2,
    \]
    and
    \[
    \*E_{\mu}\stp{\Gamma_2\tp{f, f}} = \frac{1}{n^2} \norm{\sum_{i = 1}^n D_i f}_{L^2\tp{\mu}}^2 = \frac{1}{n^2} \sum_{i, j = 1}^n \inner{D_i f}{D_j f}_{L^2\tp{\mu}},
    \]
    as claimed.
\end{proof}

For each site \(i \in \stp{n}\), define the local mean and variance by
\begin{equation}
    m_i\tp{\*x} \defeq \*E\stp{X_i \mid \*X_{-i} = \*x_{-i}}, \qquad v_i\tp{\*x} \defeq \*Var\stp{X_i \mid \*X_{-i} = \*x_{-i}}.
    \label{eq:local-mean-variance}
\end{equation}
Define the local difference operator \(d_i\) by
\begin{equation}
    d_i f\tp{\*x} \defeq \frac{f\tp{\*x^{i \gets +1}} - f\tp{\*x^{i \gets -1}}}{2}.
    \label{eq:local-difference-operator}
\end{equation}
The following lemma expresses the coordinate projection \(D_i f\) and its \(L^2\tp{\mu}\)-norm in terms of the local difference \(d_i f\).

\begin{lemma}
    For every \(i \in \stp{n}\) and \(f \in L^2\tp{\mu}\), the following identities hold:
    \[
    D_i f\tp{\*x} = \tp{x_i - m_i\tp{\*x}} d_i f\tp{\*x}, \quad \forall \*x \in \set{\pm 1}^n,
    \]
    \[
    \*E_{\mu}\stp{v_i \tp{d_i f}^2} = \norm{D_i f}_{L^2\tp{\mu}}^2.
    \]
    \label{lem:local-variance-difference-identity}
\end{lemma}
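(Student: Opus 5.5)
We verify both identities by a direct computation inside the two-point conditional law of \(X_i\) given \(\*X_{-i} = \*x_{-i}\). Write \(p_\pm(\*x) \defeq \*{Pr}\stp{X_i = \pm 1 \mid \*X_{-i} = \*x_{-i}}\), so that \(m_i = p_+ - p_-\) and \(v_i = 1 - m_i^2 = 4 p_+ p_-\). Both \(m_i\), \(v_i\), \(p_\pm\), and \(d_i f\) depend only on \(\*x_{-i}\); this is the only structural fact we need.

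For the first identity, write \(f(\*x) = a(\*x_{-i}) + x_i\, d_i f(\*x)\), where \(a \defeq \tfrac12\tp{f(\*x^{i\gets +1}) + f(\*x^{i\gets -1})}\). This decomposition is valid since \(x_i \in \set{\pm 1}\). Taking the conditional expectation given \(\*X_{-i}\) and using that \(a\) and \(d_i f\) are \(\*x_{-i}\)-measurable gives
\[
P_i f(\*x) = a(\*x_{-i}) + m_i(\*x)\, d_i f(\*x).
\]
Subtracting, we get \(D_i f = f - P_i f = \tp{x_i - m_i}\, d_i f\).

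For the second identity, we condition on \(\*X_{-i}\) and use the tower property:
\[
\norm{D_i f}_{L^2(\mu)}^2 = \*E_\mu\stp{(d_i f)^2\, \*E\stp{(X_i - m_i)^2 \mid \*X_{-i}}} = \*E_\mu\stp{v_i\, (d_i f)^2}.
\]
This holds because \((d_i f)^2\) is \(\*X_{-i}\)-measurable and the inner conditional expectation is exactly \(v_i\) by definition \eqref{eq:local-mean-variance}.

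There is no real obstacle here. The only point requiring care is noting that \(d_i f\), \(m_i\), and \(v_i\) do not depend on \(x_i\), which justifies pulling them out of the conditional expectations; this is immediate from definitions \eqref{eq:local-mean-variance} and \eqref{eq:local-difference-operator}.
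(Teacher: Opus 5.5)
Your proof is correct and follows the paper's own argument essentially verbatim: you write \(f = a(\*x_{-i}) + x_i\, d_i f\), take the conditional expectation given \(\*X_{-i}\) and subtract to get the first identity, then use the tower property with the \(\*X_{-i}\)-measurability of \(d_i f\) to get the second. The auxiliary notation \(p_\pm\) and the formula \(v_i = 4 p_+ p_-\) are correct but not needed.
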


\begin{proof}[Proof of \Cref{lem:local-variance-difference-identity}]
    Fix \(i \in \stp{n}\) and \(\*x \in \set{\pm 1}^n\). Since \(x_i \in \set{\pm 1}\),
    \[
    f\tp{\*x} = \frac{f\tp{\*x^{i \gets +1}} + f\tp{\*x^{i \gets -1}}}{2} + x_i d_i f\tp{\*x}.
    \]
    Both the first term on the right-hand side and \(d_i f\tp{\*x}\) depend only on \(\*x_{-i}\). Taking the conditional expectation in \eqref{eq:single-site-resampling-operator} therefore gives
    \[
    P_i f\tp{\*x} = \frac{f\tp{\*x^{i \gets +1}} + f\tp{\*x^{i \gets -1}}}{2} + m_i\tp{\*x} d_i f\tp{\*x}.
    \]
    Subtracting this identity from the preceding one proves
    \[
    D_i f\tp{\*x} = \tp{x_i - m_i\tp{\*x}} d_i f\tp{\*x}.
    \]
    Since \(d_i f\tp{\*X}\) is measurable with respect to \(\*X_{-i}\), we have
    \[
    \*E\stp{\tp{D_i f\tp{\*X}}^2 \mid \*X_{-i}} = \tp{d_i f}^2 \*E\stp{\tp{X_i - m_i\tp{\*X}}^2 \mid \*X_{-i}} = v_i \tp{d_i f}^2.
    \]
    Taking expectations yields the second desired identity.
\end{proof}

The key technical input is the following estimate for a two-spin Ising model. Its proof is deferred to \Cref{subsec:two-spin-estimate}.

\begin{lemma}
    Suppose that \(\mu = \mu_{\*J, \*h}\) is a two-spin Ising model on \(\set{\pm 1}^2\); that is,
    \[
    \mu\tp{\*x} \propto \exp\tp{J_{12} x_1 x_2 + h_1 x_1 + h_2 x_2}, \quad \*x \in \set{\pm 1}^2.
    \]
    There exist universal constants \(\delta \in \tp{0, 1}\) and \(C > 0\) such that, whenever \(\abs{\tanh J_{12}} \le \delta\), the following inequality holds for every \(f \colon \set{\pm 1}^2 \to \bb{R}\):
    \[
    \begin{aligned}
        \inner{D_1 f}{D_2 f}_{L^2\tp{\mu}} \ge &-\tp{\tanh J_{12}} \inner{v_1 \tp{d_1 f}}{v_2 \tp{d_2 f}}_{L^2\tp{\mu}} \\
        &- 2 \tp{\tanh J_{12}}^2 \inner{m_1 v_1 \tp{d_1 f}}{m_2 v_2 \tp{d_2 f}}_{L^2\tp{\mu}} \\
        &- C \abs{\tanh J_{12}}^3 \tp{\norm{D_1 f}_{L^2\tp{\mu}}^2 + \norm{D_2 f}_{L^2\tp{\mu}}^2}.
    \end{aligned}
    \]
    \label{lem:two-spin-estimate}
\end{lemma}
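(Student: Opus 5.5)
The plan is to reduce the inequality to a bilinear estimate on a \(2 \times 2\) kernel. Write \(t \defeq \tanh J_{12}\) and \(s_k \defeq \tanh h_k\). Up to normalization, \(\mu\tp{\*x} \propto \tp{1 + t x_1 x_2}\tp{1 + s_1 x_1}\tp{1 + s_2 x_2}\), since \(e^{J x_1 x_2} \propto 1 + t x_1 x_2\) on \(\set{\pm 1}^2\).

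By \Cref{lem:local-variance-difference-identity}, \(D_1 f = \tp{x_1 - m_1} a\) and \(D_2 f = \tp{x_2 - m_2} b\), where \(a \defeq d_1 f\) depends only on \(x_2\) and \(b \defeq d_2 f\) depends only on \(x_1\). Moreover \(m_1 = \tp{s_1 + t x_2}/\tp{1 + t s_1 x_2}\) and \(v_1 = \tp{1 - s_1^2}\tp{1 - t^2}/\tp{1 + t s_1 x_2}^2\), with symmetric formulas for \(m_2\) and \(v_2\).

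Every term in the claimed inequality is a bilinear form \(\sum_{x_1, x_2} K\tp{x_1, x_2}\, a\tp{x_2}\, b\tp{x_1}\) or a quadratic form in \(a\) and \(b\) separately. It therefore suffices to bound the residual kernel
\[
R\tp{x_1, x_2} \defeq \mu\tp{\*x} \stp{\tp{x_1 - m_1}\tp{x_2 - m_2} + t v_1 v_2 + 2 t^2 m_1 v_1 m_2 v_2}
\]
entrywise, in the form
\[
\abs{R\tp{x_1, x_2}} \le C \abs{t}^3 \sqrt{w_1\tp{x_2}\, w_2\tp{x_1}},
\]
where \(w_1\tp{x_2} \defeq \mu\tp{X_2 = x_2} v_1\tp{x_2}\) and \(w_2\tp{x_1} \defeq \mu\tp{X_1 = x_1} v_2\tp{x_1}\). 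Given this bound, Cauchy--Schwarz and AM--GM give
\[
\abs{\sum R\, a\, b} \le C \abs{t}^3 \sum_{x_1, x_2} \sqrt{w_1 w_2}\, \abs{a}\,\abs{b} \le 2 C \abs{t}^3 \tp{\sum_{x_2} w_1 a^2 + \sum_{x_1} w_2 b^2},
\]
and the right-hand side equals \(2 C \abs{t}^3 \tp{\norm{D_1 f}^2 + \norm{D_2 f}^2}\), again by \Cref{lem:local-variance-difference-identity}. This yields the claim with a relabeled \(C\).

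To prove the entrywise bound, I would compute everything explicitly as rational functions of \(\tp{t, s_1, s_2}\), with denominators bounded away from \(0\) once \(\abs{t} \le \delta\) (for instance \(1 + t s_k x \ge 1 - \delta\)). The steps are:
\begin{itemize}
    \item Use \(x_1 - m_1 = \tp{x_1 - s_1 - t x_2 + t s_1 x_1 x_2}/\tp{1 + t s_1 x_2} = \tp{1 - s_1 x_1}\tp{x_1 - t x_2}/\tp{1 + t s_1 x_2}\), and its analogue for \(x_2 - m_2\).
    \item Multiply by \(\mu\tp{\*x}\). The factor \(\tp{1 + s_1 x_1}\tp{1 - s_1 x_1} = 1 - s_1^2\) combines, and the same happens for the second spin. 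The entry \(\mu\tp{\*x}\tp{x_1 - m_1}\tp{x_2 - m_2}\) thus equals \(\tp{1 - s_1^2}\tp{1 - s_2^2} Z^{-1}\) times a rational function \(g\tp{t, s_1, s_2, \*x}\) that is smooth near \(t = 0\).
    \item The correction terms \(\mu\, t v_1 v_2\) and \(\mu\, 2 t^2 m_1 v_1 m_2 v_2\) carry the factor \(\tp{1 - s_1^2}\tp{1 - s_2^2}\) from \(v_1 v_2\). Hence \(R = \tp{1 - s_1^2}\tp{1 - s_2^2} Z^{-1} \tilde g\tp{t, s_1, s_2, \*x}\).
    \item Taylor-expand \(\tilde g\) in \(t\) to check that the orders \(t^0\), \(t^1\), \(t^2\) vanish identically in \(s_1, s_2\). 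This vanishing is precisely how the coefficients \(-t\) and \(-2 t^2\) in the statement are forced. The remainder is then \(O\tp{\abs{t}^3}\), uniformly over \(s \in \stp{-1, 1}^2\), by compactness and smoothness.
    \item On the other side, \(w_1\tp{x_2} w_2\tp{x_1}\) is comparable, up to constants for small \(\abs{t}\), to \(\tp{1 - s_1^2}^2 \tp{1 - s_2^2}^2 Z^{-2}\) times products of the marginal factors.
\end{itemize}

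The main obstacle is this uniformity in the fields. When \(\abs{s_k} \to 1\), both sides degenerate, and one must verify that \(R\) contains not only the factor \(\tp{1 - s_1^2}\tp{1 - s_2^2}\) but also enough of the marginal weights \(\tp{1 + s_1 x_1}\) and \(\tp{1 + s_2 x_2}\) to be dominated by \(\sqrt{w_1\tp{x_2} w_2\tp{x_1}}\). The risk is that the marginal factors attached to \(w_1\) and \(w_2\) sit on mismatched coordinates.

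I would first work out this matching explicitly. Note that \(\mu\tp{X_2 = x_2} \asymp \tp{1 + s_2 x_2}/Z\) up to \(1 \pm O\tp{\abs{t}}\), and similarly for \(X_1\). The needed bound then reduces to showing that \(\tilde g\tp{t, s_1, s_2, \*x}\) is at most \(C \abs{t}^3 \sqrt{\tp{1 + s_1 x_1}\tp{1 + s_2 x_2}}\). If this fails at the corners \(\abs{s_k} \to 1\), I would instead extract the vanishing of \(\tilde g\) at those corners directly from its explicit polynomial numerator.
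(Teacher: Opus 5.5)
\textbf{Gap: the entrywise kernel bound you aim for is false.} You treat \(a = d_1 f\) and \(b = d_2 f\) as independent functions and seek a two-sided bound \(\abs{R(x_1,x_2)} \le C\abs{t}^3\sqrt{w_1(x_2)\,w_2(x_1)}\). You also claim that the Taylor coefficients of \(\tilde g\) vanish through order \(t^2\). They fail to vanish already at order \(t^0\). By your own factorization, \(\mu(\mathbf{x})(x_1 - m_1)(x_2 - m_2)\) is a positive multiple of \((1-s_1^2)(1-s_2^2)(1+tx_1x_2)(x_1 - t x_2)(x_2 - t x_1)/[(1+ts_1x_2)(1+ts_2x_1)]\). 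Both correction terms vanish at \(t = 0\), so there \(R(x_1,x_2) = \tfrac14(1-s_1^2)(1-s_2^2)\,x_1x_2\), while your right-hand side is \(0\).

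This is not a corner problem; no two-sided \(O(\abs{t}^3)\) estimate can hold. At \(t=0\), the choice \(f = (x_1-s_1)(x_2-s_2)\) gives \(\inner{D_1 f}{D_2 f}_{L^2(\mu)} = (1-s_1^2)(1-s_2^2)\). Worse, the unconstrained bilinear inequality your reduction needs is simply false. Take \(a(x_2) = x_2\) and \(b(x_1) = -x_1\): then \(\sum R\,a\,b = -(1-s_1^2)(1-s_2^2) < 0\) at \(t = 0\), where the allowed error is zero.

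\textbf{Missing idea: \(d_1 f\) and \(d_2 f\) are coupled.} Indeed \(d_1 f(+1) - d_1 f(-1) = d_2 f(+1) - d_2 f(-1)\), since both equal the mixed second difference of \(f\). So the pair \((a,b)\) above is not realizable. The paper builds this in by writing \(f = c + p(x_1 - s_1) + q(x_2 - s_2) + r(x_1-s_1)(x_2-s_2)\), so that \(d_1 f = p + r(x_2 - s_2)\) and \(d_2 f = q + r(x_1 - s_1)\) share \(r\). With \(A = 1-s_1^2\) and \(B = 1-s_2^2\), the residual becomes \(AB(\phi_0\,pq + \phi_1\,pr + \phi_2\,qr + \phi_{12}\,r^2)\). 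Here \(\phi_0 = O(t^3)\), \(\phi_1,\phi_2 = O(t^4)\), and \(\phi_{12} = 1 + O(t^2) \ge \tfrac12\).

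The order-one part of your kernel feeds only the nonnegative \(r^2\) term, which is simply dropped; this is exactly where the estimate is one-sided. The cancellation you expected holds only for the coefficients of \(pq\), \(pr\), \(qr\) (the latter two even through order \(t^3\)). That cancellation in the \(pq\) coefficient is what pins down the constants \(-t\) and \(-2t^2\).

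The remaining terms are compared with \(\norm{D_1 f}_{L^2(\mu)}^2 \gtrsim Ap^2 + ABr^2\) and \(\norm{D_2 f}_{L^2(\mu)}^2 \gtrsim Bq^2 + ABr^2\). Since \(A, B \le 1\), the common factor \(AB\) absorbs the degeneration as \(\abs{s_k} \to 1\), so the corner issue you flag as the main obstacle never arises. Your uniform Taylor expansion in \(t\) over \(s \in [-1,1]^2\) is fine once applied to these three-parameter coefficients rather than to the kernel entries.
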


Combining the integrated Bakry--\Emery criterion with the two-spin estimate yields the deterministic criterion for a \Poincare inequality.

\begin{proof}[Proof of \Cref{thm:deterministic-poincare-criterion}]
    Fix \(f \in L^2\tp{\mu}\), and define the random vectors
    \[
    \*a \defeq \tp{v_i \tp{d_i f}}_{i = 1}^n,
    \qquad
    \*b \defeq \tp{m_i v_i \tp{d_i f}}_{i = 1}^n.
    \]
    Also, let \(\delta\) and \(C\) be the universal constants in \Cref{lem:two-spin-estimate}.

    For each \(i < j\), condition on \(\*X_{-\set{i, j}}\). The conditional law of \(\tp{X_i, X_j}\) is a two-spin Ising model with interaction \(J_{ij}\) and some external field. Under this conditioning, the two-spin resampling operators, local means and variances, and difference operators coincide with the restrictions of \(D_i, D_j, m_i, m_j, v_i, v_j, d_i, d_j\). Hence, applying \Cref{lem:two-spin-estimate} to this conditional law and then averaging over \(\*X_{-\set{i, j}}\) gives
    \[
    \inner{D_i f}{D_j f}_{L^2\tp{\mu}} \ge -T_{ij} \*E_{\mu}\stp{a_i a_j} - 2 T_{ij}^2 \*E_{\mu}\stp{b_i b_j} - C \abs{T_{ij}}^3 \tp{\norm{D_i f}_{L^2\tp{\mu}}^2 + \norm{D_j f}_{L^2\tp{\mu}}^2}.
    \]
    Summing this inequality over \(i < j\) and applying \Cref{prop:integrated-carre-du-champ} yields
    \[
    n^2 \*E_{\mu}\stp{\Gamma_2\tp{f, f}} \ge \sum_{i = 1}^n \norm{D_i f}_{L^2\tp{\mu}}^2 - \*E_{\mu}\stp{\*a^\top \*T \*a} - 2 \*E_{\mu}\stp{\*b^\top \tp{\*T \circ \*T} \*b} - 2 C r_3\tp{\*T} \sum_{i = 1}^n \norm{D_i f}_{L^2\tp{\mu}}^2.
    \]
    Set \(\alpha \defeq \max\set{\norm{\*T}_{\-{op}}, \, 2 \norm{\*T \circ \*T}_{\-{op}}}\). Since \(v_i = 1 - m_i^2\), pointwise in \(\*x\) we have
    \[
    a_i^2 + b_i^2 = v_i^2 \tp{1 + m_i^2} \tp{d_i f}^2 = v_i \tp{1 - m_i^4} \tp{d_i f}^2 \le v_i \tp{d_i f}^2.
    \]
    Therefore, by the operator-norm bound and \Cref{lem:local-variance-difference-identity},
    \[
    \begin{aligned}
        \*E_{\mu}\stp{\*a^\top \*T \*a}
        + 2 \*E_{\mu}\stp{\*b^\top \tp{\*T \circ \*T} \*b}
        &\le \norm{\*T}_{\-{op}} \*E_{\mu}\stp{\norm{\*a}_2^2}
        + 2 \norm{\*T \circ \*T}_{\-{op}} \*E_{\mu}\stp{\norm{\*b}_2^2} \\
        &\le \alpha \sum_{i = 1}^n \*E_{\mu}\stp{v_i \tp{d_i f}^2} \\
        &= \alpha \sum_{i = 1}^n \norm{D_i f}_{L^2\tp{\mu}}^2.
    \end{aligned}
    \]
    Combining the preceding bounds with \Cref{prop:integrated-carre-du-champ}, we obtain
    \[
    \*E_{\mu}\stp{\Gamma_2\tp{f, f}}
    \ge \frac{1 - \alpha - 2 C r_3\tp{\*T}}{n} \*E_{\mu}\stp{\Gamma\tp{f, f}}.
    \]
    Renaming \(2 C\) as \(C\), the integrated Bakry--\Emery criterion gives the asserted \Poincare inequality.
\end{proof}

\subsection{The Two-Spin Estimate}
\label{subsec:two-spin-estimate}

In this subsection, we prove the two-spin estimate stated in \Cref{lem:two-spin-estimate}. The argument can be viewed as a Taylor expansion around \(\tanh J_{12} = 0\).

\begin{proof}[Proof of \Cref{lem:two-spin-estimate}]
    Set
    \[
    a \defeq \tanh h_1, \qquad b \defeq \tanh h_2, \qquad t \defeq \tanh J_{12}, \qquad A \defeq 1 - a^2, \qquad B \defeq 1 - b^2.
    \]
    When \(z \in \set{\pm 1}\), the identity \(e^{h z} = \cosh\tp{h} \tp{1 + z \tanh h}\) holds. It follows that
    \[
    \mu\tp{\*x} \propto \exp\tp{J_{12} x_1 x_2 + h_1 x_1 + h_2 x_2} \propto \tp{1 + a x_1}\tp{1 + b x_2}\tp{1 + t x_1 x_2}.
    \]
    Hence,
    \[
    \mu\tp{x_1, x_2} = \frac{\tp{1 + a x_1}\tp{1 + b x_2}\tp{1 + t x_1 x_2}}{4 \tp{1 + a b t}}.
    \]
    In particular,
    \[
    m_1\tp{x_2} = \frac{a + t x_2}{1 + a t x_2}, \qquad m_2\tp{x_1} = \frac{b + t x_1}{1 + b t x_1}, \qquad v_1\tp{x_2} = \frac{A \tp{1 - t^2}}{\tp{1 + a t x_2}^2}, \qquad v_2\tp{x_1} = \frac{B \tp{1 - t^2}}{\tp{1 + b t x_1}^2}.
    \]

    Expand \(f\) in the centered basis of the product law with means \(a\) and \(b\):
    \[
    f\tp{\*x} = c + p \tp{x_1 - a} + q \tp{x_2 - b} + r \tp{x_1 - a}\tp{x_2 - b}.
    \]
    Then
    \[
    d_1 f\tp{x_2} = p + r \tp{x_2 - b}, \qquad d_2 f\tp{x_1} = q + r \tp{x_1 - a}.
    \]
    By \Cref{lem:local-variance-difference-identity},
    \[
    D_1 f\tp{\*x} = \tp{x_1 - m_1\tp{x_2}} d_1 f\tp{x_2}, \qquad D_2 f\tp{\*x} = \tp{x_2 - m_2\tp{x_1}} d_2 f\tp{x_1}.
    \]
    Define
    \[
    Q \defeq \inner{D_1 f}{D_2 f}_{L^2\tp{\mu}} + t \inner{v_1 \tp{d_1 f}}{v_2 \tp{d_2 f}}_{L^2\tp{\mu}} + 2 t^2 \inner{m_1 v_1 \tp{d_1 f}}{m_2 v_2 \tp{d_2 f}}_{L^2\tp{\mu}}.
    \]
    Substituting the preceding formulas and collecting the coefficients of \(p q, p r, q r, r^2\) gives
    \[
    Q = A B \tp{\phi_0 p q + \phi_1 p r + \phi_2 q r + \phi_{12} r^2},
    \]
    where each coefficient is a rational function of \(a, b, t\). Direct expansion at \(t = 0\) gives
    \[
    \begin{aligned}
        \phi_0 &= \tp{-6 a^2 b^2 + 2 a^2 + 2 b^2 - 1} t^3 + O\tp{t^4}, \\
        \phi_1 &= -b \tp{a^2 - 1}\tp{18 a^2 b^2 - 14 a^2 - 6 b^2 + 5} t^4 + O\tp{t^5}, \\
        \phi_2 &= -a \tp{b^2 - 1}\tp{18 a^2 b^2 - 6 a^2 - 14 b^2 + 5} t^4 + O\tp{t^5}, \\
        \phi_{12} &= 1 + O\tp{t^2}.
    \end{aligned}
    \]
    Indeed, the coefficients have denominators formed from \(1 + a b t\), \(1 \pm a t\), and \(1 \pm b t\), which are uniformly bounded away from zero whenever \(\abs{t} \le \delta_0 < 1\). Thus all Taylor remainders above are uniform. Taking a universal \(\delta > 0\) sufficiently small, we obtain
    \[
    \abs{\phi_0} \le C_0 \abs{t}^3, \qquad \abs{\phi_1} + \abs{\phi_2} \le C_0 \abs{t}^4, \qquad \phi_{12} \ge \frac{1}{2}.
    \]
    Hence,
    \[
    Q \ge A B \tp{-C_0 \abs{t}^3 \abs{p q} - C_0 \abs{t}^4 \tp{\abs{p r} + \abs{q r}} + \frac{1}{2} r^2} \ge - C_0 A B \abs{t}^3 \tp{\abs{p q} + \abs{p r} + \abs{q r}}.
    \]

    On the other hand, for the right-hand side of the inequality in \Cref{lem:two-spin-estimate}, a direct computation gives
    \[
    \begin{aligned}
        \norm{D_1 f}_{L^2\tp{\mu}}^2 &= \frac{A \tp{1 - t^2}}{1 + a b t} \sum_{x_2 \in \set{\pm 1}} \frac{\tp{1 + b x_2} \tp{p + r \tp{x_2 - b}}^2}{2 \tp{1 + a t x_2}} \\
        &\ge C_1 A \sum_{x_2 \in \set{\pm 1}} \frac{1 + b x_2}{2} \tp{p + r \tp{x_2 - b}}^2 \\
        &= C_1 \tp{A p^2 + A B r^2},
    \end{aligned}
    \]
    where \(C_1 > 0\) is a universal constant. Symmetrically,
    \[
    \norm{D_2 f}_{L^2\tp{\mu}}^2 \ge C_1 \tp{B q^2 + A B r^2}.
    \]
    Since \(0 < A, B \le 1\), we have
    \[
    \begin{aligned}
        Q &\ge - C_0 A B \abs{t}^3 \tp{\abs{p q} + \abs{p r} + \abs{q r}} \\
        &\ge - C_0 A B \abs{t}^3 \tp{p^2 + q^2 + r^2} \\
        &\ge - \frac{C_0}{C_1} \abs{t}^3 \tp{\norm{D_1 f}_{L^2\tp{\mu}}^2 + \norm{D_2 f}_{L^2\tp{\mu}}^2}.
    \end{aligned}
    \]
    Taking \(C \defeq C_0 / C_1\) completes the proof.
\end{proof}

\subsection{SK Estimates and \Poincare Inequality}
\label{subsec:sk-estimates-poincare}

We now verify the deterministic criterion from \Cref{thm:deterministic-poincare-criterion} for the SK interaction matrix. The required random-matrix estimates are collected in the following lemma.

\begin{lemma}
    For every fixed \(\beta > 0\), let \(\*J\) be the SK interaction matrix at inverse temperature \(\beta\), as defined in \eqref{eq:sk-matrix}, and set \(\*T \defeq \tp{\tanh J_{ij}}_{i, j = 1}^n\). There exists a deterministic positive sequence \(\zeta_n \to 0\), independent of \(\beta\), such that the following bounds hold simultaneously with probability at least \(1 - o_{\beta}\tp{1}\):
    \[
    \max_{i, j \in \stp{n}} \abs{T_{ij}} \le \zeta_n, \qquad \norm{\*T}_{\-{op}} \le 2 \beta + \zeta_n, \qquad \norm{\*T \circ \*T}_{\-{op}} \le \beta^2 + \zeta_n, \qquad r_3\tp{\*T} \le \zeta_n.
    \]
    \label{lem:sk-interaction-estimates}
\end{lemma}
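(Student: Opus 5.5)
The plan is to prove the four bounds separately, each holding with probability \(1 - o_\beta(1)\) and with an explicit error that can be dominated by a single \(\beta\)-independent sequence \(\zeta_n\); a union bound then finishes. Since \(|\tanh x| \le |x|\), and since \(\tanh x - x = O(|x|^3)\) for small \(|x|\), I will always compare \(\*T\) with \(\*J\) and control the difference entrywise.

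\emph{Entrywise and cubic bounds.} A Gaussian union bound gives \(\max_{i,j}|J_{ij}| \le \beta\sqrt{6\log n / n}\) with probability \(1 - O(n^{-1})\). Hence \(\max|T_{ij}| \le \beta\sqrt{6\log n/n}\) on that event. For \(r_3\), write \(\sum_j |T_{ij}|^3 \le \max_j|J_{ij}| \cdot \sum_j J_{ij}^2\). For each \(i\), the sum \(\sum_j J_{ij}^2\) is \(\beta^2/n\) times a \(\chi^2_{n-1}\) variable, so by Laurent--Massart and a union bound over \(i\), it is at most \(2\beta^2\) for all \(i\) with probability \(1 - e^{-cn}\). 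Therefore \(r_3(\*T) \le 2\beta^3\sqrt{6\log n/n}\).

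\emph{Removing the \(\beta\)-dependence.} The errors above are \(\beta\)-dependent. Because \(\zeta_n\) must be independent of \(\beta\) but the probability may depend on \(\beta\), I can take \(\zeta_n \defeq n^{-1/4}\), say. For fixed \(\beta\), every error term above is eventually below \(n^{-1/4}\). For small \(n\), the probability statement is vacuous in the \(o_\beta(1)\) sense.

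\emph{Operator norm of \(\*T\).} Write \(\*T = \*J + \*E\), where \(|E_{ij}| \le |J_{ij}|^3/3\). Then
\[
\norm{\*E}_{\-{op}} \le \max_i \sum_j |E_{ij}| \le \tfrac13 \max_i \sum_j |J_{ij}|^3 = O_\beta\tp{\sqrt{\log n / n}}.
\]
Here I use that the operator norm of a symmetric matrix is bounded by its maximal absolute row sum. For the main term, the standard bound \(\norm{\*W}_{\-{op}} \le 2 + o(1)\) with high probability holds for GOE. It also holds with the diagonal removed, since the diagonal has norm \(O(\sqrt{\log n/n})\). Either Gaussian concentration of the Lipschitz function \(\norm{\cdot}_{\-{op}}\) around its mean, together with \(\*E\norm{\*W}_{\-{op}} \le 2 + o(1)\), or Bandeira--van Handel gives \(\norm{\*J}_{\-{op}} \le 2\beta + n^{-1/8}\) with probability \(1 - o(1)\).

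\emph{Hadamard square.} Write \(\*T \circ \*T = \*J \circ \*J + \*E'\), where \(|E'_{ij}| \le |J_{ij}|^4\), so that \(\norm{\*E'}_{\-{op}} \le \max_i \sum_j J_{ij}^4 = O_\beta(\log n/n)\). Next decompose
\[
\*J \circ \*J = \frac{\beta^2}{n}\tp{\*1\*1^\top - I} + \*M,
\]
where \(M_{ij} = J_{ij}^2 - \beta^2/n\) for \(i \ne j\). The first term has operator norm at most \(\beta^2\). The matrix \(\*M\) is a symmetric matrix with independent, centered, subexponential entries of size \(O(\beta^2/n)\). Its operator norm is \(O_\beta(\log n/\sqrt n)\) with high probability, which I would obtain from a matrix Bernstein inequality after truncating the entries at \(\beta^2 \cdot 6\log n/n\). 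Here the variance proxy is \(\max_i \sum_j \*E M_{ij}^2 = O(\beta^4/n)\) and the uniform bound is \(O(\beta^2\log n/n)\), giving \(\norm{\*M}_{\-{op}} = O_\beta(\log n/\sqrt n)\).

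\emph{Main obstacle.} I expect the hardest step to be making the operator-norm estimate for \(\*J\) a rigorous \(2\beta + o(1)\) bound rather than \(C\beta\). The truncation needed for the Hadamard-square bound is routine by comparison. For the operator norm I would simply cite the classical result \(\norm{\*W}_{\-{op}} \to 2\) almost surely (Füredi--Komlós, or Bai--Yin for Wigner matrices).
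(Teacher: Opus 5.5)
Your proposal is correct. For the entrywise bound, $r_3$, and $\norm{\mathbf{T}}_{\mathrm{op}}$ it matches the paper up to cosmetics. You control $\mathbf{T}-\mathbf{J}$ by a maximal row sum where the paper uses the Frobenius norm, and your $r_3$ bound via $\max_j \abs{J_{ij}} \sum_j J_{ij}^2$ saves a $\log n$ factor.

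The genuine difference is the Hadamard square. The paper never centers anything. Since $\mathbf{T}\circ\mathbf{T}$ is symmetric and entrywise nonnegative, its operator norm is at most its maximal row sum, so
$\norm{\mathbf{T}\circ\mathbf{T}}_{\mathrm{op}} \le \max_i \sum_j J_{ij}^2 = \beta^2(1+o(1))$
by chi-square concentration and a union bound over rows. You already had both ingredients: the row-sum bound for symmetric matrices, and Laurent--Massart for $\sum_j J_{ij}^2$. You only used the latter with the crude constant $2\beta^2$. Sharpening it to $\beta^2(1 + O(\sqrt{\log n / n}))$ makes the decomposition into $\frac{\beta^2}{n}(\mathbf{1}\mathbf{1}^\top - I) + \mathbf{M}$, the truncation, and matrix Bernstein unnecessary. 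The perturbation $\mathbf{E}'$ is also unnecessary, since $0 \le T_{ij}^2 \le J_{ij}^2$ already gives $\norm{\mathbf{T}\circ\mathbf{T}}_{\mathrm{op}} \le \norm{\mathbf{J}\circ\mathbf{J}}_{\mathrm{op}}$ by Perron--Frobenius.

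Your route is still valid: the variance proxy $O(\beta^4/n)$ and truncation level $O(\beta^2 \log n / n)$ do give $\norm{\mathbf{M}}_{\mathrm{op}} = O_\beta(\log n/\sqrt{n})$. It also yields more structure, namely a two-sided estimate exhibiting $\mathbf{J}\circ\mathbf{J}$ as essentially rank one plus a vanishing fluctuation. But only the upper bound is needed, and for a nonnegative matrix whose row sums concentrate, the row-sum test is already sharp to leading order.

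One bookkeeping slip: your explicit choice $\zeta_n = n^{-1/4}$ does not dominate the $n^{-1/8}$ error you allow in $\norm{\mathbf{J}}_{\mathrm{op}} \le 2\beta + n^{-1/8}$. Take, e.g., $\zeta_n = n^{-1/10}$, or, as the paper does, any deterministic sequence tending to zero that eventually dominates every fixed-$\beta$ error term. If you only cite almost-sure convergence $\norm{\mathbf{W}}_{\mathrm{op}} \to 2$, first extract a $\beta$-free $\epsilon_n \to 0$ with $\norm{\mathbf{W}}_{\mathrm{op}} \le 2 + \epsilon_n$ with probability $1 - o(1)$. Then pick $\zeta_n$ with $\epsilon_n = o(\zeta_n)$, so that $\beta\epsilon_n \le \zeta_n$ eventually for each fixed $\beta$.
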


\begin{proof}
    A Gaussian union bound and the inequality \(\abs{\tanh z} \le \abs{z}\) show that, with probability at least \(1 - o_{\beta}\tp{1}\),
    \begin{equation}
        \max_{i, j \in \stp{n}} \abs{T_{ij}} \le \max_{i, j \in \stp{n}} \abs{J_{ij}} = O\tp{\beta \sqrt{\frac{\log n}{n}}} = O\tp{\sqrt{\frac{\log n}{n}}}.
        \label{eq:max-J-high-probability-bound}
    \end{equation}
    On the same event, we have
    \begin{equation}
        r_3\tp{\*T} \le n \tp{\max_{i, j \in \stp{n}} \abs{J_{ij}}}^3 = O\tp{\sqrt{\frac{\log^3 n}{n}}}.
        \label{eq:r3-high-probability-bound}
    \end{equation}
    Let \(\tilde{\*J} \defeq \beta \*W\), where \(\*W \sim \-{GOE}\tp{n}\), so that \(\*J\) is obtained from \(\tilde{\*J}\) by setting its diagonal to zero. This modification changes the operator norm by at most
    \[
    \max_{i \in \stp{n}} \abs{\tilde{J}_{ii}} = O\tp{\beta \sqrt{\frac{\log n}{n}}} = o\tp{1}
    \]
    with probability at least \(1 - o_{\beta}\tp{1}\). Standard random matrix theory therefore yields
    \[
    \norm{\*J}_{\-{op}} = \beta \tp{2 + o\tp{1}} = 2 \beta + o\tp{1}
    \]
    with probability at least \(1 - o_{\beta}\tp{1}\). On the event in \eqref{eq:max-J-high-probability-bound}, the bound \(\abs{\tanh z - z} \le \abs{z}^3 / 3\) implies
    \[
    \norm{\*T - \*J}_{\-{op}} \le \norm{\*T - \*J}_{\-{F}} \le \frac{1}{3} \tp{\sum_{i, j = 1}^n J_{ij}^6}^{1 / 2} = O\tp{\sqrt{\frac{\log^3 n}{n}}}.
    \]
    Combining the preceding estimates, we conclude that, with probability at least \(1 - o_{\beta}\tp{1}\),
    \begin{equation}
        \norm{\*T}_{\-{op}} \le \norm{\*J}_{\-{op}} + \norm{\*T - \*J}_{\-{op}} = 2 \beta + o\tp{1}.
        \label{eq:sk-tanh-operator-norm-bound}
    \end{equation}
    Because \(\*T \circ \*T\) is symmetric and entrywise nonnegative, its operator norm is at most its maximum row sum. Using \(\abs{\tanh z} \le \abs{z}\), we obtain
    \begin{equation}
        \norm{\*T \circ \*T}_{\-{op}} \le \max_{i \in \stp{n}} \sum_{j = 1}^n T_{ij}^2 \le \max_{i \in \stp{n}} \sum_{j = 1}^n J_{ij}^2 = \beta^2 \tp{1 + o\tp{1}} = \beta^2 + o\tp{1}
        \label{eq:sk-tanh-squared-operator-norm-bound}
    \end{equation}
    with probability at least \(1 - o_{\beta}\tp{1}\), where the final estimate follows from chi-square concentration and a union bound over the rows.

    Choosing \(\zeta_n\) to be any deterministic positive sequence tending to zero that dominates the four error terms in \eqref{eq:max-J-high-probability-bound}, \eqref{eq:r3-high-probability-bound}, \eqref{eq:sk-tanh-operator-norm-bound}, \eqref{eq:sk-tanh-squared-operator-norm-bound} completes the proof.
\end{proof}

We can strengthen the preceding lemma to the following corollary, which will be applied in \Cref{subsec:optimal-mlsi-localization-schemes}.

\begin{corollary}
    Consider the same setting as in \Cref{lem:sk-interaction-estimates}. On the event constructed in its proof, the bounds in \Cref{lem:sk-interaction-estimates} can be strengthened to
    \[
    \max_{i, j \in \stp{n}} \abs{T_{ij}^{\tp{s}}} \le \zeta_n, \qquad \norm{\*T^{\tp{s}}}_{\-{op}} \le 2 \beta s + \zeta_n, \qquad \norm{\*T^{\tp{s}} \circ \*T^{\tp{s}}}_{\-{op}} \le s \beta^2 + \zeta_n, \qquad r_3\tp{\*T^{\tp{s}}} \le \zeta_n
    \]
    for all \(s \in \stp{0, 1}\), where \(\*T^{\tp{s}} \defeq \tp{\tanh\tp{s J_{ij}}}_{i, j = 1}^n\).
    \label{cor:sk-interaction-estimates-uniform}
\end{corollary}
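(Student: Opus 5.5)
The plan is to reduce each bound for \(\*T^{\tp{s}}\) to the corresponding quantity for \(s\*J\), and then use the estimates already established in the proof of \Cref{lem:sk-interaction-estimates}. Work on the event constructed there. On it, \(\max_{i,j}\abs{J_{ij}} = O\tp{\sqrt{\log n / n}}\), \(\norm{\*J}_{\-{op}} \le 2\beta + o\tp{1}\), and \(\max_i \sum_j J_{ij}^2 \le \beta^2 + o\tp{1}\), where the \(o\tp{1}\) terms are deterministic and independent of \(\beta\). None of the steps below introduces new randomness, so the bounds hold for all \(s \in \stp{0,1}\) at once.

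Three of the bounds follow from \(\abs{\tanh z} \le \abs{z}\) and \(0 \le s \le 1\). First, \(\abs{T^{\tp{s}}_{ij}} \le s\abs{J_{ij}} \le \abs{J_{ij}}\), which gives the entrywise bound. Second, \(r_3\tp{\*T^{\tp{s}}} \le n\max\abs{J_{ij}}^3\), which is the same bound as in \eqref{eq:r3-high-probability-bound}. Third, \(\*T^{\tp{s}} \circ \*T^{\tp{s}}\) is symmetric and entrywise nonnegative, so its operator norm is at most its maximum row sum. That row sum is at most \(s^2 \max_i \sum_j J_{ij}^2 \le s^2\tp{\beta^2 + o\tp{1}} \le s\beta^2 + o\tp{1}\), using \(s^2 \le s\).

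For the operator norm of \(\*T^{\tp{s}}\), write \(\*T^{\tp{s}} = s\*J + \tp{\*T^{\tp{s}} - s\*J}\). Since \(\abs{\tanh z - z} \le \abs{z}^3/3\), the error matrix has Frobenius norm at most \(\frac{s^3}{3}\tp{\sum_{i,j} J_{ij}^6}^{1/2} \le \frac{1}{3}\tp{\sum_{i,j} J_{ij}^6}^{1/2} = O\tp{\sqrt{\log^3 n / n}}\), uniformly in \(s\). This gives \(\norm{\*T^{\tp{s}}}_{\-{op}} \le s\norm{\*J}_{\-{op}} + O\tp{\sqrt{\log^3 n/n}} \le 2\beta s + s\,o\tp{1} + O\tp{\sqrt{\log^3 n/n}}\).

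The only delicate point is bookkeeping. The \(o\tp{1}\) in \(\norm{\*J}_{\-{op}} \le 2\beta + o\tp{1}\) has to be a deterministic, \(\beta\)-independent sequence valid on the event, and \(\zeta_n\) should be chosen, or enlarged if needed, to dominate each of the error terms above. Because \(s \le 1\), each error is bounded by its value at \(s = 1\), so the same \(\zeta_n\) works for every \(s\).
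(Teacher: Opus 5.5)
Your proposal is correct and follows essentially the same route as the paper. Both arguments reduce each quantity for \(\*T^{\tp{s}}\) to the corresponding quantity for \(\*J\) via \(\abs{\tanh\tp{s x}} \le s\abs{x}\) and \(\abs{\tanh\tp{s x} - s x} \le s^3\abs{x}^3/3\), then observe that since \(s \le 1\) every error term is dominated by its \(s = 1\) value, so the same event and the same \(\zeta_n\) work uniformly in \(s\). Your explicit use of \(s^2 \le s\) to pass from \(s^2\beta^2\) to \(s\beta^2\) is a step the paper leaves implicit.
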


\begin{proof}
    For \(s \in \stp{0, 1}\), the bounds \(\abs{\tanh\tp{s x}} \le s \abs{x}\) and \(\abs{\tanh\tp{s x} - s x} \le s^3 \abs{x}^3 / 3\) give
    \[
    \begin{aligned}
        \max_{i, j \in \stp{n}} \abs{T_{ij}^{\tp{s}}} &\le s \max_{i, j \in \stp{n}} \abs{J_{ij}}, \\
        \norm{\*T^{\tp{s}}}_{\-{op}} &\le s \norm{\*J}_{\-{op}} + \frac{s^3}{3} \tp{\sum_{i, j = 1}^n J_{ij}^6}^{1 / 2}, \\
        \norm{\*T^{\tp{s}} \circ \*T^{\tp{s}}}_{\-{op}} &\le s^2 \max_{i \in \stp{n}} \sum_{j = 1}^n J_{ij}^2, \\
        r_3\tp{\*T^{\tp{s}}} &\le s^3 n \tp{\max_{i, j \in \stp{n}} \abs{J_{ij}}}^3.
    \end{aligned}
    \]
    Since \(s \le 1\), the estimates from the proof of \Cref{lem:sk-interaction-estimates} hold uniformly over \(s \in \stp{0, 1}\) on the same event and with the same error sequence, proving the claim.
\end{proof}

We now apply the preceding estimates to prove the optimal-order \Poincare inequality for the SK model.

\begin{proof}[Proof of \Cref{thm:sk-optimal-poincare}]
    Let \(\zeta_n\) be the sequence from \Cref{lem:sk-interaction-estimates}, and define \(\eta_n \defeq \tp{C + 2}\zeta_n\), where \(C\) is the universal constant in \Cref{thm:deterministic-poincare-criterion}. In particular, \(\eta_n\) is independent of \(\beta\). Since \(\beta^2 \le \beta\), the bounds in \Cref{lem:sk-interaction-estimates} imply that, on its high-probability event,
    \[
    \max\set{\norm{\*T}_{\-{op}}, 2 \norm{\*T \circ \*T}_{\-{op}}} + C r_3\tp{\*T} \le \max\set{2 \beta + \zeta_n, 2 \beta^2 + 2 \zeta_n} + C \zeta_n \le 2 \beta + \eta_n.
    \]
    Moreover, since \(\zeta_n \to 0\), for all sufficiently large \(n\) we have \(\max_{i, j \in \stp{n}} \abs{T_{ij}} \le \zeta_n \le \delta\), where \(\delta\) is the constant in \Cref{thm:deterministic-poincare-criterion}. Applying that theorem therefore gives, simultaneously for every \(\*h \in \bb{R}^n\),
    \[
    \frac{1 - 2 \beta - \eta_n}{n} \*{Var}_{\mu_{\*J, \*h}}\stp{f} \le \+E_{P_{\*J, \*h}}\tp{f, f}, \qquad \forall f \in L^2\tp{\mu_{\*J, \*h}}.
    \]
    This is the claimed \Poincare inequality.
\end{proof}

\section{Optimal-Order Modified Log-Sobolev Inequality}
\label{sec:optimal-mlsi-mixing}

Building on the \Poincare inequality from \Cref{sec:optimal-poincare}, we prove an optimal-order modified log-Sobolev inequality for the SK model and thereby obtain an \(O_{\beta}\tp{n \log n}\) mixing time. We establish the modified log-Sobolev inequality in \Cref{subsec:optimal-mlsi-localization-schemes} and deduce the mixing-time bound in \Cref{subsec:optimal-mixing}.

\subsection{Optimal Modified Log-Sobolev Inequality via Localization Schemes}
\label{subsec:optimal-mlsi-localization-schemes}

To upgrade the optimal-order \Poincare inequality to an optimal-order modified log-Sobolev inequality, we use the localization-scheme framework of \cite{CE25}. A \Poincare inequality uniform over external fields yields uniform covariance bounds by testing on linear functions, which can then be upgraded to entropy inequalities via the entropic-stability approach of Chen and Eldan.\footnote{An alternative covariance-to-log-Sobolev route is the multiscale Bakry--\Emery/Polchinski approach; see \cite[Section~3.7]{BBD24} for a comparison of these two approaches.} We recall only the consequence needed here.

\begin{theorem}[{Consequence of \cite[Theorem~49]{CE25}}]
    Let \(\mu_{\*J, \*h}\) be an Ising model on \(\set{\pm 1}^n\), where \(\*J\) is symmetric and has zero diagonal. Suppose that for each \(s \in \stp{0, 1}\),
    \[
    \norm{\cov{\mu_{s \*J, \*h'}}}_{\-{op}} \le \alpha\tp{s}, \quad \forall \*h' \in \bb{R}^n,
    \]
    and define
    \[
    \rho \defeq \frac{1}{n} \exp\tp{-\tp{\lambda_{\max}\tp{\*J} - \lambda_{\min}\tp{\*J}} \int_0^1 \alpha\tp{s} \dd s}.
    \]
    Then the Glauber dynamics kernel \(P_{\*J, \*h}\) satisfies the modified log-Sobolev inequality with constant \(\rho\):
    \[
    \rho \*{Ent}_{\mu_{\*J, \*h}}\stp{f} \le \+E_{P_{\*J, \*h}}\tp{\log f, f}, \quad \forall f \in L^2\tp{\mu_{\*J, \*h}} \cmid f \ge 0.
    \]
    \label{thm:stochastic-localization-mlsi}
\end{theorem}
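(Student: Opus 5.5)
The plan is to obtain this statement directly from the localization-scheme theory of \cite{CE25}, specialized to Ising models with Glauber dynamics. We will not reprove the framework. Instead we will check that its hypotheses, written in the language of \cite[Theorem~49]{CE25}, reduce to the covariance assumption stated here.

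\textbf{Step 1: the localization process.} Use the stochastic localization process driven by the control matrix \(\*C_t = \tp{\*J - \lambda_{\min}\tp{\*J} \*I}^{1/2}\), run for total time \(\lambda_{\max}\tp{\*J} - \lambda_{\min}\tp{\*J}\).

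\begin{itemize}
\item Since \(\*x^\top \*x = n\) on the hypercube, we may replace \(\*J\) by \(\*J - \lambda_{\min}\tp{\*J}\*I \succeq 0\) without changing the law.
\item Along this process the measure \(\mu_t\) remains an Ising model. Its interaction is \(\*J - \lambda_{\min}\*I - t\*I\), which is again equivalent to \(\*J\) modulo the identity. Its random external field is \(\*h'\).
\item Hence it is natural to reparametrize time so that the effective interaction interpolates as \(s\*J\), up to diagonal shifts, for \(s\) running from \(1\) down to \(0\).
\end{itemize}

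We must check that the terminal measure is a product measure, for which Glauber dynamics has MLSI constant \(1/n\) (approximate tensorization for product measures). Alternatively we use the version in \cite{CE25} where the process stops at a measure with \(s\)-scaled interaction. This is consistent with the hypothesis being stated for all \(s \in [0,1]\).

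\textbf{Step 2: entropic stability from covariance.} \cite{CE25} shows that for a linear-tilt localization, entropy is approximately conserved with multiplicative loss \(\exp\tp{-\int \norm{\*C_t \cov{\mu_t} \*C_t}_{\-{op}} \dd t}\).

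\begin{itemize}
\item With the reparametrization \(\dd t = \tp{\lambda_{\max}-\lambda_{\min}} \dd s\), and using \(\norm{\cov{\mu_t}}_{\-{op}} \le \alpha\tp{s}\) uniformly over the random field \(\*h'\), the exponent becomes \(\tp{\lambda_{\max}\tp{\*J} - \lambda_{\min}\tp{\*J}} \int_0^1 \alpha\tp{s} \dd s\).
\item For the Ising case it suffices to bound \(\norm{\cov{\cdot}}_{\-{op}}\), because one may choose the driving with \(\*C_t\) absorbed, as in \cite[Section~6]{CE25} and \cite{AKJVP22}.
\item The uniformity over all \(\*h'\) is exactly what handles the fact that the tilted field is random.
\end{itemize}

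\textbf{Step 3: conclusion.} Combine approximate conservation of entropy with the \(1/n\) MLSI of Glauber dynamics for the terminal product measure, via the standard Markov-chain decomposition lemma of \cite{CE25}. This yields \(\rho = \frac1n \exp\tp{-\cdots}\).

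\textbf{Main obstacle.} The delicate point is matching normalizations: identifying the time change so that the covariance along the path is exactly that of \(\mu_{s\*J,\*h'}\), and confirming that the integrand is \(\alpha(s)\) with the prefactor \(\lambda_{\max}-\lambda_{\min}\) rather than a squared-norm variant. I would first write the tilted density explicitly to confirm the \(s\)-parametrization, and then cite \cite[Theorem~49]{CE25} verbatim for the rest.
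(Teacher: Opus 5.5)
Your route is the paper's. The paper gives no argument beyond citing \cite[Theorem~49]{CE25}, and the only reduction needed is the one you make. Pass to the positive semidefinite matrix \(\*J' \defeq \*J - \lambda_{\min}\tp{\*J}\*I\), whose operator norm is \(w \defeq \lambda_{\max}\tp{\*J} - \lambda_{\min}\tp{\*J}\). Since \(s\*J'\) and \(s\*J\) differ by a multiple of the identity, \(\mu_{s\*J', \*h'} = \mu_{s\*J, \*h'}\), so the covariance hypothesis transfers verbatim. That reduction and the citation are sound. However, the mechanics you sketch in Step~1 are wrong in two places, and these errors are exactly what produces the normalization worry you list as the main obstacle.

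\textbf{The interaction along the process.} Under stochastic localization with a deterministic driving matrix \(\*C_t\), the density of \(\mu_t\) with respect to \(\mu\) is proportional to \(\exp\tp{\inner{\*y_t}{\*x} - \frac{1}{2}\*x^\top \Sigma_t \*x}\), where \(\Sigma_t = \int_0^t \*C_u^2 \dd u\). With \(\*C_t = \tp{\*J'}^{1/2}\) this gives \(\mu_t = \mu_{\tp{1-t}\*J', \*h + \*y_t}\). It is not an Ising model with interaction \(\*J' - t\*I\); that interaction has the same law as \(\*J'\) on the hypercube and would never decouple.

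\textbf{The running time.} The process must run for unit time. Running this \(\*C_t\) for time \(w\) gives \(\Sigma_w = w\*J'\), which is not \(\*J'\) unless \(w = 1\). Combined with \(\norm{\*C_t}_{\-{op}}^2 = w\), it produces precisely the prefactor \(w^2\) you were worried about.

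With unit time, everything closes:
\begin{itemize}
\item \(\mu_1\) is a genuine product measure, so your hedge about stopping at an \(s\)-scaled interaction is unnecessary.
\item Every tilt \(\+T_{\*v}\mu_t \propto e^{\inner{\*v}{\*x}}\mu_t\) equals \(\mu_{\tp{1-t}\*J, \*h + \*y_t + \*v}\), so it is covered by the hypothesis at \(s = 1 - t\).
\item Hence the entropic-stability rate is at most \(\norm{\*C_t}_{\-{op}}^2 \alpha\tp{1-t} = w\,\alpha\tp{1-t}\), and integrating over \(t \in \stp{0,1}\) gives the exponent \(w\int_0^1 \alpha\tp{s} \dd s\).
\end{itemize}
Equivalently, you can run \(\*C_t = \tp{\*J'/w}^{1/2}\) for time \(w\). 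Then the prefactor comes from the length of the time interval rather than from \(\norm{\*C_t}_{\-{op}}^2\).

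\textbf{Step~3.} What is transferred back is approximate tensorization of entropy for the product measure \(\mu_1\), not its MLSI as such. The transfer uses the supermartingale property of \(\sum_i \*E_{\mu_t}\stp{\*{Ent}_{\mu_t, i}\stp{f}}\). Together with \(\+E_{P}\tp{\log f, f} \ge \frac{1}{n}\sum_i \*E_{\mu}\stp{\*{Ent}_{\mu, i}\stp{f}}\), this yields exactly the stated \(\rho\).
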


The required covariance bounds follow directly from the \Poincare inequality.

\begin{lemma}
    Let \(\mu\) be a probability measure on \(\set{\pm 1}^n\) and let \(P\) be the Glauber dynamics kernel for \(\mu\). Suppose that \(P\) satisfies the \Poincare inequality with constant \(\gamma\):
    \[
    \gamma \*{Var}_{\mu}\stp{f} \le \+E_P\tp{f, f}, \quad \forall f \in L^2\tp{\mu}.
    \]
    Then
    \[
    \norm{\cov{\mu}}_{\-{op}} \le \frac{1}{\gamma n}.
    \]
    \label{lem:poincare-covariance-bound}
\end{lemma}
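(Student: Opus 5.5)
The plan is to test the \Poincare inequality on linear functions. Fix a unit vector \(\*u \in \bb{R}^n\) and set \(f\tp{\*x} \defeq \inner{\*u}{\*x}\), the Euclidean inner product. Then \(\*{Var}_{\mu}\stp{f} = \*u^\top \cov{\mu} \*u\). The operator norm of the positive semidefinite matrix \(\cov{\mu}\) is the supremum of this quadratic form over unit \(\*u\). So it suffices to show \(\+E_P\tp{f, f} \le 1/n\) for every such \(f\). The \Poincare inequality then gives \(\gamma \, \*u^\top \cov{\mu} \*u \le 1/n\), and taking the supremum over \(\*u\) yields the claim.

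To bound the Dirichlet form, use the decomposition \eqref{eq:generator-coordinate-decomposition} together with the computation in the proof of \Cref{prop:integrated-carre-du-champ}:
\[
\+E_P\tp{f, f} = \frac{1}{n} \sum_{i = 1}^n \norm{D_i f}_{L^2\tp{\mu}}^2 .
\]
That proof used only that each \(P_i\) is a conditional expectation, so it applies to any \(\mu\) on \(\set{\pm 1}^n\). By \Cref{lem:local-variance-difference-identity}, \(\norm{D_i f}_{L^2\tp{\mu}}^2 = \*E_{\mu}\stp{v_i \tp{d_i f}^2}\). For the linear function \(f\) we have \(d_i f \equiv u_i\). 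The local variance satisfies \(v_i \le 1\), since \(X_i \in \set{\pm 1}\). Hence \(\norm{D_i f}_{L^2\tp{\mu}}^2 \le u_i^2\), and so \(\+E_P\tp{f, f} \le \frac{1}{n} \sum_i u_i^2 = \frac{1}{n}\).

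One technicality needs care. \Cref{lem:local-variance-difference-identity} was stated for Ising models, but its proof only uses conditioning on \(\*X_{-i}\). If \(\mu\) is not fully supported, the conditional quantities need only be defined on the support, and the identity still holds there. There is no real obstacle. The only point to check is that the Dirichlet-form identity does not depend on the Ising structure, which it does not.
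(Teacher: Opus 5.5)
Your proposal is correct and matches the paper's proof: both test the \Poincare inequality on the linear functions \(\*x \mapsto \*u^\top \*x\), write the Dirichlet form as \(\frac{1}{n}\sum_i \*E_{\mu}\stp{v_i \tp{d_i f}^2}\) using \Cref{prop:integrated-carre-du-champ} and \Cref{lem:local-variance-difference-identity}, and then use \(d_i f = u_i\) and \(v_i \le 1\). Your remark that these identities need only the conditional-expectation structure, not the Ising form, is a valid justification for applying them to a general \(\mu\) on \(\set{\pm 1}^n\).
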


\begin{proof}[Proof of \Cref{lem:poincare-covariance-bound}]
    Fix \(\*u \in \bb{R}^n\), and set \(\ell_{\*u}\tp{\*x} \defeq \*u^\top \*x\). Then
    \[
    \*{Var}_{\mu}\stp{\ell_{\*u}} = \*u^\top \cov{\mu} \*u.
    \]
    Moreover, \Cref{prop:integrated-carre-du-champ}, \Cref{lem:local-variance-difference-identity}, \(d_i \ell_{\*u} = u_i\) and \(v_i \le 1\) give
    \[
    \+E_P\tp{\ell_{\*u}, \ell_{\*u}} = \frac{1}{n} \sum_{i = 1}^n \norm{D_i \ell_{\*u}}_{L^2\tp{\mu}}^2 = \frac{1}{n} \sum_{i = 1}^n \*E_{\mu}\stp{v_i \tp{d_i \ell_{\*u}}^2} = \frac{1}{n} \sum_{i = 1}^n \*E_{\mu}\stp{v_i} u_i^2 \le \frac{1}{n} \norm{\*u}_2^2.
    \]
    Applying the assumed \Poincare inequality to \(\ell_{\*u}\) yields \(\*u^\top \cov{\mu} \*u \le \norm{\*u}_2^2 / \tp{\gamma n}\). Taking the supremum over \(\norm{\*u}_2 = 1\) proves the claim.
\end{proof}

We now combine this covariance estimate with \Cref{thm:stochastic-localization-mlsi} to prove an optimal-order modified log-Sobolev inequality for the SK model.

\begin{theorem}
    For every fixed \(\beta < 1 / 2\), let \(\*J\) be the SK interaction matrix at inverse temperature \(\beta\), as defined in \eqref{eq:sk-matrix}. For all sufficiently large \(n \ge n_0\tp{\beta}\), with probability at least \(1 - o_{\beta}\tp{1}\), the Glauber dynamics kernel \(P_{\*J, \*h}\) satisfies the following modified log-Sobolev inequality simultaneously over all external fields \(\*h \in \bb{R}^n\):
    \[
    \frac{\tp{1 - 2 \beta}^2 - o_{\beta}\tp{1}}{n} \*{Ent}_{\mu_{\*J, \*h}}\stp{f} \le \+E_{P_{\*J, \*h}}\tp{\log f, f}, \quad \forall f \in L^2\tp{\mu_{\*J, \*h}} \cmid f \ge 0.
    \]
    \label{thm:sk-optimal-mlsi}
\end{theorem}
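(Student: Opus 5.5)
We would work on the high-probability event of \Cref{lem:sk-interaction-estimates}, together with the random-matrix event \(\lambda_{\max}\tp{\*J} - \lambda_{\min}\tp{\*J} \le \norm{\*J}_{\-{op}} \cdot 2 \le 4 \beta + o\tp{1}\). The latter is already contained in that proof through \(\norm{\*J}_{\-{op}} = 2 \beta + o\tp{1}\). The plan is to verify the covariance hypothesis of \Cref{thm:stochastic-localization-mlsi} with an explicit \(\alpha\tp{s}\), integrate it, and compare the resulting constant with the claimed \(\tp{1 - 2\beta}^2\).

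\textbf{Step 1 (uniform Poincaré along the interpolation).} For \(s \in \stp{0, 1}\), the matrix \(s \*J\) is symmetric with zero diagonal, and its tanh-matrix is \(\*T^{\tp{s}}\). On the event of \Cref{lem:sk-interaction-estimates}, \Cref{cor:sk-interaction-estimates-uniform} gives
\[
\max\set{\norm{\*T^{\tp{s}}}_{\-{op}}, 2 \norm{\*T^{\tp{s}} \circ \*T^{\tp{s}}}_{\-{op}}} + C r_3\tp{\*T^{\tp{s}}} \le \max\set{2 \beta s, 2 s \beta^2} + \tp{C + 2} \zeta_n = 2 \beta s + \eta_n .
\]
Moreover, \(\max_{i, j} \abs{T^{\tp{s}}_{ij}} \le \zeta_n \le \delta\) for \(n \ge n_0\tp{\beta}\). \Cref{thm:deterministic-poincare-criterion} then yields the Poincaré inequality for \(P_{s \*J, \*h'}\) with constant \(\tp{1 - 2 \beta s - \eta_n} / n\). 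This holds for every \(\*h'\) and every \(s\) on a single event, and positivity uses \(\beta < 1/2\) with \(\eta_n\) small.

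\textbf{Step 2 (covariance bounds).} \Cref{lem:poincare-covariance-bound} converts the Poincaré inequality into
\[
\norm{\cov{\mu_{s \*J, \*h'}}}_{\-{op}} \le \alpha\tp{s} \defeq \frac{1}{1 - 2 \beta s - \eta_n}.
\]

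\textbf{Step 3 (integration).} A direct computation gives
\[
\int_0^1 \alpha\tp{s} \dd s = \frac{1}{2 \beta} \log \frac{1 - \eta_n}{1 - 2 \beta - \eta_n}.
\]
We then use \(\lambda_{\max}\tp{\*J} - \lambda_{\min}\tp{\*J} \le 4 \beta + o\tp{1}\). For this we take the spectral edges \(\lambda_{\max} \le 2 \beta + o\tp{1}\) and \(\lambda_{\min} \ge -2 \beta - o\tp{1}\) from the operator norm bound in \Cref{lem:sk-interaction-estimates}. The product of the two factors is
\[
\tp{4 \beta + o\tp{1}} \int_0^1 \alpha\tp{s} \dd s = \tp{2 + o_\beta\tp{1}} \log \frac{1 - \eta_n}{1 - 2 \beta - \eta_n}.
\]
Exponentiating the negative of this, \Cref{thm:stochastic-localization-mlsi} gives
\[
\rho = \frac{1}{n} \tp{\frac{1 - 2 \beta - \eta_n}{1 - \eta_n}}^{2 + o_\beta\tp{1}} = \frac{\tp{1 - 2 \beta}^2 - o_\beta\tp{1}}{n},
\]
using continuity and that \(1 - 2 \beta\) is bounded away from \(0\) for fixed \(\beta\). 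This constant is uniform over \(\*h\), as required.

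\textbf{Main obstacle.} The delicate point is the bookkeeping of the error terms. The \(o\tp{1}\) in the spectral width multiplies a logarithm that is finite only because \(\beta < 1/2\) is fixed, which is why the final error is only \(o_\beta\tp{1}\). One must also make sure that a single event covers all \(s\) and all fields; \Cref{cor:sk-interaction-estimates-uniform} is designed for exactly that. A lesser worry is that \Cref{thm:stochastic-localization-mlsi} might require \(\alpha\) to be measurable or finite on all of \(\stp{0, 1}\). This holds because \(2 \beta s + \eta_n < 1\) uniformly for large \(n\).
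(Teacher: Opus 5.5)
Your proposal is correct and follows essentially the same route as the paper. You obtain uniform Poincaré inequalities for \(P_{s\*J,\*h'}\) from \Cref{cor:sk-interaction-estimates-uniform} and \Cref{thm:deterministic-poincare-criterion}, convert them to \(\alpha\tp{s} = 1/\tp{1 - 2\beta s - \eta_n}\) via \Cref{lem:poincare-covariance-bound}, and apply \Cref{thm:stochastic-localization-mlsi} with spectral width at most \(4\beta + 2\zeta_n\), with the same integral and the same limiting constant \(\tp{1-2\beta}^2\).
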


\begin{proof}[Proof of \Cref{thm:sk-optimal-mlsi}]
    Work on the high-probability event in \Cref{cor:sk-interaction-estimates-uniform}, and let \(C\) be the universal constant in \Cref{thm:deterministic-poincare-criterion}. The bounds in the proof of \Cref{cor:sk-interaction-estimates-uniform} imply that uniformly over \(s \in \stp{0, 1}\),
    \[
    \max\set{\norm{\*T^{\tp{s}}}_{\-{op}}, \, 2 \norm{\*T^{\tp{s}} \circ \*T^{\tp{s}}}_{\-{op}}} + C r_3\tp{\*T^{\tp{s}}} \le 2 \beta s + \eta_n,
    \]
    where \(\eta_n \defeq \tp{C + 2}\zeta_n \to 0\). For all sufficiently large \(n\), \Cref{thm:deterministic-poincare-criterion} therefore gives, simultaneously for every \(s \in \stp{0, 1}\) and \(\*h' \in \bb{R}^n\), a \Poincare inequality for \(P_{s \*J, \*h'}\) with constant
    \[
    \gamma_n\tp{s} \defeq \frac{1 - 2 \beta s - \eta_n}{n} > 0.
    \]
    By \Cref{lem:poincare-covariance-bound},
    \[
    \norm{\cov{\mu_{s \*J, \*h'}}}_{\-{op}} \le \alpha_n\tp{s} \defeq \frac{1}{1 - 2 \beta s - \eta_n}, \quad \forall s \in \stp{0, 1}, \, \forall \*h' \in \bb{R}^n.
    \]
    On the same event, \(\norm{\*J}_{\-{op}} \le 2 \beta + \zeta_n\), and hence
    \[
    \lambda_{\max}\tp{\*J} - \lambda_{\min}\tp{\*J} \le 4 \beta + 2 \zeta_n.
    \]
    Applying \Cref{thm:stochastic-localization-mlsi} with \(\alpha_n\) and using
    \[
    \int_0^1 \alpha_n\tp{s} \dd s = \frac{1}{2 \beta} \log\tp{\frac{1 - \eta_n}{1 - 2 \beta - \eta_n}}
    \]
    shows that the modified log-Sobolev constant for \(P_{\*J, \*h}\) is at least
    \[
    \frac{1}{n} \exp\tp{-\tp{4 \beta + 2 \zeta_n} \frac{1}{2 \beta} \log\tp{\frac{1 - \eta_n}{1 - 2 \beta - \eta_n}}} = \frac{\tp{1 - 2 \beta}^2 - o_{\beta}\tp{1}}{n}
    \]
    as claimed.
\end{proof}

\subsection{Optimal Mixing of Glauber Dynamics}
\label{subsec:optimal-mixing}

Although \Cref{thm:functional-inequality-mixing,thm:sk-optimal-mlsi} do not directly yield a mixing bound uniform in the external field, a preliminary coupon-collector phase provides a warm start and leads to an \(O_{\beta}\tp{n \log n}\) mixing time.

\begin{lemma}
    Let \(P_{\*J, \*h}\) be the Glauber dynamics kernel for the Ising model \(\mu_{\*J, \*h}\) on \(\set{\pm 1}^n\), where \(\*J\) is symmetric and has zero diagonal. Fix an initial configuration and an integer \(N \ge n\). Let \(\nu\) be the law after \(N\) steps, conditional on every site having been updated at least once. Then
    \[
    \frac{\dd \nu}{\dd \mu_{\*J, \*h}} \le \exp\tp{4 \sum_{i, j = 1}^n \abs{J_{ij}}}.
    \]
    \label{lem:coupon-collector-warmness}
\end{lemma}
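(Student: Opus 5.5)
The plan is to condition on the entire sequence of chosen sites, and then bound the conditional law of the final configuration pointwise against \(\mu \defeq \mu_{\*J, \*h}\).

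Fix a deterministic update sequence \(i_1, \ldots, i_N\) that covers every site, together with the initial configuration. For each site \(i\), let \(\tau_i\) be the last time at which \(i\) is updated. Order the sites by \(\tau_i\). Because the flip probability \(\mu(\*x^{\oplus i}) / (\mu(\*x) + \mu(\*x^{\oplus i}))\) is exactly the heat-bath rule, the value of \(x_i\) after time \(\tau_i\) is drawn from the conditional law \(\mu(x_i \mid \*x_{-i})\) at the current state. That value is never changed afterwards. Consequently, the final configuration \(\*y\) has probability
\[
\nu_{\text{seq}}(\*y) = \sum_{\text{histories}} \Pr[\text{history}] \prod_{i} \mu\bigl(y_i \mid \*z^{(i)}_{-i}\bigr),
\]
where \(\*z^{(i)}\) is the state just before time \(\tau_i\). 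Its coordinates agree with \(\*y\) on sites already finalized, and they are arbitrary (history-dependent) elsewhere.

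The key step is a comparison between each conditional factor and the product measure \(\pi(y) \propto \prod_i e^{h_i y_i}\). Write the local field at \(i\) as \(h_i + \sum_j J_{ij} z_j\). Then
\[
\mu(y_i \mid \*z_{-i}) = \frac{e^{y_i(h_i + \sum_j J_{ij} z_j)}}{2\cosh(h_i + \sum_j J_{ij} z_j)}.
\]
Shifting the field by at most \(s_i \defeq \sum_j |J_{ij}|\) changes this ratio by a factor of at most \(e^{2 s_i}\). So each factor lies within \(e^{\pm 2 s_i}\) of \(\pi_i(y_i)\). Multiplying over \(i\) and summing over histories gives
\[
\nu_{\text{seq}}(\*y) \le e^{2 \sum_{ij} |J_{ij}|} \pi(\*y).
\]
The bound is uniform over sequences, so it survives averaging over sequences conditioned on full coverage. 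This gives the same bound for \(\nu\).

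It remains to compare \(\pi\) with \(\mu\). We have
\[
\mu(\*y) = \frac{e^{\frac12 \*y^\top \*J \*y} \, \pi(\*y)}{\mathbf{E}_{\pi}\bigl[e^{\frac12 \*x^\top \*J \*x}\bigr]},
\]
and \(|\tfrac12 \*x^\top \*J \*x| \le \tfrac12 \sum_{ij} |J_{ij}|\). Hence \(\pi(\*y) \le e^{\sum_{ij} |J_{ij}|} \mu(\*y)\). Combining the two bounds gives a density bound of \(e^{3 \sum_{ij} |J_{ij}|} \le e^{4 \sum_{ij} |J_{ij}|}\), which is what the lemma asserts.

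The main obstacle is making the first step rigorous. The conditional factors depend on unfinalized coordinates, and also on the future trajectory conditioned on the last-update times. We must check that conditioning on the fixed site sequence, rather than on the random flips, keeps each finalizing draw an exact heat-bath draw given the past. This holds because the site sequence is chosen independently of the spins, so the flip randomness at time \(\tau_i\) is fresh.
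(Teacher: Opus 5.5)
Your proof is correct. Its first half is the paper's chronological-exposure argument: you fix a covering site sequence, use that each last update is a fresh heat-bath draw, bound the final configuration's probability by a product of single-site conditionals at arbitrary states, and then average over covering sequences. Your ``sum over histories'' should be read with \(\Pr[\text{history}]\) as the product of the non-terminal transition probabilities, with the terminal outcomes forced to equal \(\*y\). These products sum to \(1\) when the non-terminal steps are summed out from the last one backwards, so the step you flagged as the main obstacle is sound.

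The two routes differ in how you bound \(\mu_{\*J,\*h}(\*y)\) from below.
\begin{itemize}
\item The paper expands \(\mu_{\*J,\*h}\) by the chain rule and notes that each partial conditional is a convex combination of full single-site conditionals. This gives \(\mu_{\*J,\*h}(\*x) \ge \prod_i \min_{\*z} p_i(x_i \mid \*z)\). It then bounds the largest-to-smallest ratio of each single-site conditional by \(e^{4\sum_j |J_{ij}|}\).
\item You instead pass through the field-only product measure \(\pi\). Each conditional factor is at most \(e^{2s_i}\pi_i(y_i)\). Directly from the Gibbs density, \(\pi \le e^{\sum_{i,j}|J_{ij}|}\mu_{\*J,\*h}\), because \(\abs{\tfrac12 \*x^\top \*J \*x} \le \tfrac12\sum_{i,j}|J_{ij}|\).
\end{itemize}

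What each buys: your version uses the explicit Ising density and gives the slightly better exponent \(3\sum_{i,j}|J_{ij}|\). The paper's version is model-free. It gives \(\dd\nu/\dd\mu \le \prod_i \max_{\*z} p_i(x_i\mid\*z)/\min_{\*z} p_i(x_i\mid\*z)\) for any fully supported measure on \(\set{\pm1}^n\), with the Ising structure entering only through the oscillation of the single-site conditionals. The improved constant does not matter downstream, since only \(\log\log M\) enters the mixing bound.
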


\begin{proof}[Proof of \Cref{lem:coupon-collector-warmness}]
    Let \(\*X \sim \mu_{\*J, \*h}\). For \(i \in \stp{n}\), \(x_i \in \set{\pm 1}\), and \(\*x_{-i} \in \set{\pm 1}^{n - 1}\), set
    \[
    p_i\tp{x_i \mid \*x_{-i}} \defeq \*{Pr}\stp{X_i = x_i \mid \*X_{-i} = \*x_{-i}}.
    \]
    Fix a deterministic site-covering schedule of \(N\) steps, and let \(q\) be the law of the final configuration. Order the sites by their last-update times and expose those terminal update outcomes chronologically. Conditional on the previously exposed outcomes, the probability of prescribing the final value \(x_i\) is at most \(\max_{\*z} p_i\tp{x_i \mid \*z}\). Multiplying these bounds yields the next display.
    \[
    q\tp{\*x} \le \prod_{i = 1}^n \max_{\*z \in \set{\pm 1}^{n - 1}} p_i\tp{x_i \mid \*z}.
    \]
    Another chain-rule expansion of \(\mu_{\*J, \*h}\) in any coordinate order gives
    \[
    \mu_{\*J, \*h}\tp{\*x} \ge \prod_{i = 1}^n \min_{\*z \in \set{\pm 1}^{n - 1}} p_i\tp{x_i \mid \*z}.
    \]
    Indeed, each partial conditional probability in this chain-rule expansion is a convex combination of the full single-site conditionals \(p_i\tp{x_i \mid \*z}\), and hence is bounded below by their minimum. Since
    \[
    \log p_i\tp{x_i \mid \*z} = x_i h_{-i}\tp{\*z} - \log\tp{2 \cosh h_{-i}\tp{\*z}}, \qquad h_{-i}\tp{\*z} \defeq h_i + \sum_{j \ne i} J_{ij} z_j
    \]
    and the right-hand side is \(2\)-Lipschitz in \(h_{-i}\tp{\*z}\), whose range has diameter at most \(2 \sum_{j \ne i} \abs{J_{ij}}\), we have
    \[
    \frac{\max_{\*z \in \set{\pm 1}^{n - 1}} p_i\tp{x_i \mid \*z}}{\min_{\*z \in \set{\pm 1}^{n - 1}} p_i\tp{x_i \mid \*z}} \le \exp\tp{4 \sum_{j \ne i} \abs{J_{ij}}}.
    \]
    Therefore,
    \[
    \frac{q\tp{\*x}}{\mu_{\*J, \*h}\tp{\*x}} \le \exp\tp{4 \sum_{i = 1}^n \sum_{j \ne i} \abs{J_{ij}}} = \exp\tp{4 \sum_{i, j = 1}^n \abs{J_{ij}}}, \quad \forall \*x \in \set{\pm 1}^n.
    \]
    Conditional on covering every site, the random update schedule is a mixture of deterministic covering schedules, so the same bound holds for \(\nu\).
\end{proof}

We now combine the warm start from \Cref{lem:coupon-collector-warmness} with the optimal-order modified log-Sobolev inequality to prove the optimal \(O_{\beta}\tp{n \log n}\) mixing time.

\begin{proof}[Proof of \Cref{thm:sk-optimal-mixing}]
    Work on the intersection of the high-probability event in \Cref{thm:sk-optimal-mlsi} with the event in \eqref{eq:max-J-high-probability-bound}. For all sufficiently large \(n \ge n_0\tp{\beta}\), there exists \(c_{\beta} > 0\) such that \(P_{\*J, \*h}\) has modified log-Sobolev constant \(\rho \ge c_{\beta} / n\), simultaneously for every \(\*h \in \bb{R}^n\). Moreover, with \(S_{\*J} \defeq \sum_{i, j = 1}^n \abs{J_{ij}}\),
    \[
    S_{\*J} \le n^2 \max_{i, j \in \stp{n}} \abs{J_{ij}} = O\tp{n^{3 / 2} \sqrt{\log n}}.
    \]
    Fix \(\*h \in \bb{R}^n\), \(\*x \in \set{\pm 1}^n\), and \(\eps \in \tp{0, 1}\), and set \(N_0 \defeq \ceil{n \log\tp{2 n / \eps}}\). If \(\+A\) is the event that every site is updated during the first \(N_0\) steps, then
    \[
    \*{Pr}\stp{\+A^c} \le n \tp{1 - \frac{1}{n}}^{N_0} \le \frac{\eps}{2}.
    \]
    Let \(\nu\) be the law at time \(N_0\) conditional on \(\+A\). By \Cref{lem:coupon-collector-warmness}, \(\nu\) is \(M_{\*J}\)-warm for \(M_{\*J} \defeq \exp\tp{1 + 4 S_{\*J}}\). Set
    \[
    N_1 \defeq \ceil{\frac{1}{\rho} \tp{\log\tp{\frac{2}{\eps^2}} + \log\tp{1 + 4 S_{\*J}}}} = O_{\beta}\tp{n \log\tp{\frac{n}{\eps}}}.
    \]
    By \Cref{thm:functional-inequality-mixing}, \(d_{\-{TV}}\tp{\nu P_{\*J, \*h}^{N_1}, \mu_{\*J, \*h}} \le \eps / 2\). Decomposing the law at time \(N_0\) according to \(\+A\), we obtain
    \[
    d_{\-{TV}}\tp{P_{\*J, \*h}^{N_0 + N_1}\tp{\*x, \cdot}, \mu_{\*J, \*h}} \le \*{Pr}\stp{\+A^c} + d_{\-{TV}}\tp{\nu P_{\*J, \*h}^{N_1}, \mu_{\*J, \*h}} \le \eps.
    \]
    Since \(N_0 + N_1 = O_{\beta}\tp{n \log\tp{n / \eps}}\) uniformly over \(\*x\) and \(\*h\), the claim follows.
\end{proof}

\section*{Statement on AI Use}

The proof of the \Poincare inequality in \Cref{sec:optimal-poincare} was initially generated by GPT-5.6 Sol Ultra over two rounds of interaction. The author independently verified and streamlined the argument, extended it to establish a modified log-Sobolev inequality and an \(O_{\beta}\tp{n \log n}\) mixing time, and composed the manuscript.

GPT-5.6 Sol was also used for editorial assistance, including improving the exposition and identifying typographical errors. The author takes full responsibility for the contents of this paper.

\section*{Acknowledgments}

The author is especially grateful to Roland Bauerschmidt for pointing out relevant literature and for valuable comments on earlier work. The author also thanks Zongchen Chen, Heng Guo, and Xinyuan Zhang for helpful discussions.

\bibliographystyle{alpha}
\bibliography{refs}

@inproceedings{AKV24,
  author    = {Anari, Nima and Koehler, Frederic and Vuong, Thuy-Duong},
  title     = {Trickle-down in localization schemes and applications},
  booktitle = {Proceedings of the 56th Annual ACM Symposium on Theory of Computing},
  pages     = {1094--1105},
  year      = {2024}
}

@article{CE25,
  author  = {Chen, Yuansi and Eldan, Ronen},
  title   = {Localization schemes: a framework for proving mixing bounds for {Markov} chains},
  journal = {Duke Mathematical Journal},
  volume  = {174},
  number  = {8},
  pages   = {1431--1510},
  year    = {2025}
}

@article{SK75,
  author  = {Sherrington, David and Kirkpatrick, Scott},
  title   = {Solvable model of a spin-glass},
  journal = {Physical Review Letters},
  volume  = {35},
  number  = {26},
  pages   = {1792--1796},
  year    = {1975}
}

@article{BKMR25,
  author  = {Bandeira, Afonso S. and Kireeva, Anastasia and Maillard, Antoine and R{\"o}dder, Almut},
  title   = {{Randomstrasse101}: open problems of 2024},
  journal = {arXiv preprint arXiv:2504.20539},
  year    = {2025}
}

@article{EKZ22,
  author  = {Eldan, Ronen and Koehler, Frederic and Zeitouni, Ofer},
  title   = {A spectral condition for spectral gap: fast mixing in high-temperature {Ising} models},
  journal = {Probability Theory and Related Fields},
  volume  = {182},
  number  = {3--4},
  pages   = {1035--1051},
  year    = {2022}
}

@inproceedings{AKJVP22,
  author    = {Anari, Nima and Jain, Vishesh and Koehler, Frederic and Pham, Huy Tuan and Vuong, Thuy-Duong},
  title     = {Entropic independence: optimal mixing of down-up random walks},
  booktitle = {Proceedings of the 54th Annual ACM SIGACT Symposium on Theory of Computing},
  pages     = {1418--1430},
  year      = {2022}
}

@article{DLSS26b,
  author  = {Davies, Ewan and Lee, Holden and Sandhu, Juspreet Singh and Shi, Jonathan},
  title   = {Weak {Poincar\'e} inequalities via approximate stochastic localization: application to sampling the {Sherrington--Kirkpatrick} model},
  journal = {arXiv preprint arXiv:2607.08160},
  year    = {2026}
}

@article{DLSS26a,
  author  = {Davies, Ewan and Lee, Holden and Sandhu, Juspreet Singh and Shi, Jonathan},
  title   = {Potential Hessian ascent {III}: sampling the {Sherrington--Kirkpatrick} model at $\beta < 1/2$},
  journal = {arXiv preprint arXiv:2605.03718},
  year    = {2026}
}

@inproceedings{EAMS22,
  author       = {El Alaoui, Ahmed and Montanari, Andrea and Sellke, Mark},
  title        = {Sampling from the {Sherrington--Kirkpatrick} {Gibbs} measure via algorithmic stochastic localization},
  booktitle    = {2022 IEEE 63rd Annual Symposium on Foundations of Computer Science (FOCS)},
  pages        = {323--334},
  year         = {2022},
  organization = {IEEE}
}

@article{Cel24,
  author  = {Celentano, Michael},
  title   = {{Sudakov--Fernique} post-{AMP}, and a new proof of the local convexity of the {TAP} free energy},
  journal = {The Annals of Probability},
  volume  = {52},
  number  = {3},
  pages   = {923--954},
  year    = {2024}
}

@book{LPW17,
  author    = {Levin, David A. and Peres, Yuval and Wilmer, Elizabeth L.},
  title     = {{Markov} chains and mixing times},
  edition   = {2nd},
  year      = {2017},
  publisher = {American Mathematical Society},
  address   = {Providence, Rhode Island},
  isbn      = {978-1-4704-2962-1}
}

@inproceedings{BT03,
  author    = {Bobkov, Sergey G. and Tetali, Prasad},
  title     = {Modified log-{Sobolev} inequalities, mixing and hypercontractivity},
  booktitle = {Proceedings of the Thirty-Fifth Annual ACM Symposium on Theory of Computing},
  pages     = {287--296},
  year      = {2003}
}

@article{KKO13,
  author  = {Kondratiev, Yuri and Kuna, Tobias and Ohlerich, Nataliya},
  title   = {Spectral gap for {Glauber} type dynamics for a special class of potentials},
  journal = {Electronic Journal of Probability},
  volume  = {18},
  number  = {42},
  pages   = {1--18},
  year    = {2013}
}

@article{Joh17,
  author  = {Johnson, Oliver},
  title   = {A discrete log-{Sobolev} inequality under a {Bakry-\'Emery} type condition},
  journal = {Annales de l'Institut Henri Poincar{\'e}, Probabilit{\'e}s et Statistiques},
  volume  = {53},
  number  = {4},
  pages   = {1952--1970},
  year    = {2017}
}

@article{GJMPPS26,
  author  = {G{\"o}bel, Andreas and Jenssen, Matthew and Michelen, Marcus and Pappik, Marcus and Perkins, Will and Schiller, Leon},
  title   = {A simple proof of rapid mixing on random regular graphs beyond uniqueness},
  journal = {arXiv preprint arXiv:2606.27545},
  year    = {2026}
}

@article{BCDPP06,
  author  = {Boudou, Anne-Severine and Caputo, Pietro and Dai Pra, Paolo and Posta, Gustavo},
  title   = {Spectral gap estimates for interacting particle systems via a {Bochner}-type identity},
  journal = {Journal of Functional Analysis},
  volume  = {232},
  number  = {1},
  pages   = {222--258},
  year    = {2006}
}

@book{BGL14,
  author    = {Bakry, Dominique and Gentil, Ivan and Ledoux, Michel},
  title     = {Analysis and geometry of {Markov} diffusion operators},
  series    = {Grundlehren der mathematischen Wissenschaften},
  volume    = {348},
  year      = {2014},
  publisher = {Springer}
}

@article{Eld13,
  author  = {Eldan, Ronen},
  title   = {Thin shell implies spectral gap up to polylog via a stochastic localization scheme},
  journal = {Geometric and Functional Analysis},
  volume  = {23},
  number  = {2},
  pages   = {532--569},
  year    = {2013}
}

@inproceedings{CCCYZ25,
  author       = {Chen, Xiaoyu and Chen, Zejia and Chen, Zongchen and Yin, Yitong and Zhang, Xinyuan},
  title        = {Rapid mixing on random regular graphs beyond uniqueness},
  booktitle    = {2025 IEEE 66th Annual Symposium on Foundations of Computer Science (FOCS)},
  pages        = {2170--2193},
  year         = {2025},
  organization = {IEEE}
}

@article{Sel25,
  author  = {Sellke, Mark},
  title   = {Exponentially slow mixing of the low temperature {SK} model},
  journal = {arXiv preprint arXiv:2511.22621},
  year    = {2025}
}

@article{SZ81,
  author  = {Sompolinsky, Haim and Zippelius, Annette},
  title   = {Dynamic theory of the spin-glass phase},
  journal = {Physical Review Letters},
  volume  = {47},
  number  = {5},
  pages   = {359--362},
  year    = {1981}
}

@book{MPV87,
  author    = {M{\'e}zard, Marc and Parisi, Giorgio and Virasoro, Miguel Angel},
  title     = {Spin glass theory and beyond: an introduction to the replica method and its applications},
  series    = {World Scientific Lecture Notes in Physics},
  volume    = {9},
  year      = {1987},
  publisher = {World Scientific Publishing Company}
}

@incollection{BE85,
  author    = {Bakry, Dominique and {\'E}mery, Michel},
  title     = {Diffusions hypercontractives},
  booktitle = {S{\'e}minaire de Probabilit{\'e}s XIX, 1983/84},
  editor    = {Az{\'e}ma, Jacques and Yor, Marc},
  series    = {Lecture Notes in Mathematics},
  volume    = {1123},
  pages     = {177--206},
  year      = {1985},
  publisher = {Springer},
  address   = {Berlin}
}

@misc{Che26,
  author = {Chewi, Sinho},
  title  = {Log-concave sampling},
  year   = {2026},
  note   = {\url{https://chewisinho.github.io/main.pdf}. Version: 2026}
}

@article{BB19,
  author  = {Bauerschmidt, Roland and Bodineau, Thierry},
  title   = {A very simple proof of the {LSI} for high temperature spin systems},
  journal = {Journal of Functional Analysis},
  volume  = {276},
  number  = {8},
  pages   = {2582--2588},
  year    = {2019}
}

@article{BBD24,
  author  = {Bauerschmidt, Roland and Bodineau, Thierry and Dagallier, Benoit},
  title   = {Stochastic dynamics and the {Polchinski} equation: an introduction},
  journal = {Probability Surveys},
  volume  = {21},
  pages   = {200--290},
  year    = {2024}
}

@article{BEAR26,
  author  = {Bandeira, Afonso S. and El Alaoui, Ahmed and R{\"o}dder, Almut},
  title   = {Mixing of {Glauber} dynamics on high overlap {Gibbs} measures},
  journal = {arXiv preprint arXiv:2607.06813},
  year    = {2026}
}

@article{HMP24,
  author  = {Huang, Brice and Montanari, Andrea and Pham, Huy Tuan},
  title   = {Sampling from spherical spin glasses in total variation via algorithmic stochastic localization},
  journal = {arXiv preprint arXiv:2404.15651},
  year    = {2024}
}

@inproceedings{AJKVP24,
  author       = {Anari, Nima and Jain, Vishesh and Koehler, Frederic and Pham, Huy Tuan and Vuong, Thuy-Duong},
  title        = {Universality of spectral independence with applications to fast mixing in spin glasses},
  booktitle    = {Proceedings of the 2024 Annual ACM-SIAM Symposium on Discrete Algorithms (SODA)},
  pages        = {5029--5056},
  year         = {2024},
  organization = {SIAM}
}

@article{ABXY24,
  author  = {Adhikari, Arka and Brennecke, Christian and Xu, Changji and Yau, Horng-Tzer},
  title   = {Spectral gap estimates for mixed $p$-spin models at high temperature},
  journal = {Probability Theory and Related Fields},
  volume  = {189},
  number  = {3--4},
  pages   = {879--907},
  year    = {2024}
}

@article{GJ19,
  author  = {Gheissari, Reza and Jagannath, Aukosh},
  title   = {On the spectral gap of spherical spin glass dynamics},
  journal = {Annales de l'Institut Henri Poincar{\'e}, Probabilit{\'e}s et Statistiques},
  volume  = {55},
  number  = {2},
  pages   = {756--776},
  year    = {2019}
}

@inproceedings{HMRW25,
  author    = {Huang, Brice and Mohanty, Sidhanth and Rajaraman, Amit and Wu, David X.},
  title     = {Weak {Poincar\'e} inequalities, simulated annealing, and sampling from spherical spin glasses},
  booktitle = {Proceedings of the 57th Annual ACM Symposium on Theory of Computing},
  pages     = {915--923},
  year      = {2025}
}

@article{EAMS25,
  author  = {El Alaoui, Ahmed and Montanari, Andrea and Sellke, Mark},
  title   = {Sampling from mean-field {Gibbs} measures via diffusion processes},
  journal = {Probability and Mathematical Physics},
  volume  = {6},
  number  = {3},
  pages   = {961--1022},
  year    = {2025}
}

@article{GT02,
  author  = {Guerra, Francesco and Toninelli, Fabio L.},
  title   = {Central limit theorem for fluctuations in the high temperature region of the {Sherrington--Kirkpatrick} spin glass model},
  journal = {Journal of Mathematical Physics},
  volume  = {43},
  number  = {12},
  pages   = {6224--6237},
  year    = {2002}
}

@article{Ton02,
  author  = {Toninelli, Fabio Lucio},
  title   = {About the {Almeida--Thouless} transition line in the {Sherrington--Kirkpatrick} mean-field spin glass model},
  journal = {Europhysics Letters},
  volume  = {60},
  number  = {5},
  pages   = {764--767},
  year    = {2002}
}

\end{document}